\newtheorem{theorem}{Theorem}[section]
\newtheorem{lemma}[theorem]{Lemma}
\newcounter{claims}[theorem]
\newtheorem{claim}[claims]{Claim}
\theoremstyle{definition}
\newtheorem{definition}[theorem]{Definition}
\newtheorem*{question*}{Question}
\theoremstyle{remark}
\newcommand{\mc}[1]{\mathcal{#1}}
\renewcommand\labelenumi{(\arabic{enumi})}
\renewcommand\theenumi\labelenumi
\begin{document}

\title{Degrees of Categoricity Above Limit Ordinals}

\author{Barbara F. Csima\thanks{Partially supported by Canadian NSERC Discovery Grant 312501.}, Michael Deveau \thanks{Partially supported by Canadian NSERC Postgraduate
Scholarship PGSD1-234567-2017.}, \\Matthew Harrison-Trainor\thanks{Supported by an NSERC Banting Fellowship.}, Mohammad Assem Mahmoud}

\maketitle

\abstract{A computable structure $\mc{A}$ has degree of categoricity \textbf{d} if \textbf{d} is exactly the degree of difficulty of computing isomorphisms between isomorphic computable copies of $\mc{A}$. Fokina, Kalimullin, and Miller showed that every degree d.c.e.\ in and above $\mathbf{0}^{(n)}$, for any $n < \omega$, and also the degree $\mathbf{0}^{(\omega)}$, are degrees of categoricity. Later, Csima, Franklin, and Shore showed that every degree $\mathbf{0}^{(\alpha)}$ for any computable ordinal $\alpha$, and every degree d.c.e.\ in and above $\mathbf{0}^{(\alpha)}$ for any successor ordinal $\alpha$, is a degree of categoricity. We show that every degree c.e.\ in and above $\mathbf{0}^{(\alpha)}$, for $\alpha$ a limit ordinal, is a degree of categoricity. We also show that every degree c.e.\ in and above $\mathbf{0}^{(\omega)}$ is the degree of categoricity of a prime model, making progress towards a question of Bazhenov and Marchuk.}

\section{Introduction}

Isomorphisms of the vector space $\mathbb{Q}^\mathbb{N}$ are easy to understand: given two presentations of this vector space, one chooses a basis of each and then bijectively identifies basis elements from one presentation with basis elements from the other, in any manner one likes. Finally, the map is extended linearly to produce the full isomorphism. This classical method is straightforward, but could a computer be given this task and create such an isomorphism? That is, if the presentations of $\mathbb{Q}^\mathbb{N}$ are computable -- so the computer knows simple facts about each copy -- could we write a program that would provide an effective isomorphism, outputting the corresponding element in the second copy when given some element in the first copy as input?

The answer is no. It has long been known that there are two computable presentations of $\mathbb{Q}^\mathbb{N}$, one with a computable basis, and the other without a computable basis. Certainly there can be no computable isomorphism between these presentations, as the image of the computable basis from the first presentation would be a computable basis in the second presentation. Indeed the problem of building an isomorphism between two computable presentations of $\mathbb{Q}^\mathbb{N}$ amounts to computing bases for each of them, since the process of matching bases and extending linearly is effective; a computer could easily be given an algorithm explaining how to do this. To give a basis of a computable presentation of $\mathbb{Q}^\mathbb{N}$ requires only the ability to answer single quantifier questions: Is the next potential basis member a linear combination of what we have already included? The Turing degree of the Halting set, denoted $\mathbf{0}'$, is able to answer such questions. We say that $\mathbb{Q}^\mathbb{N}$ is $\mathbf{0}'$-computably categorical because we can compute, using $\mathbf{0}'$, an isomorphism between any two computable presentations. More generally:

\begin{definition}
Let $\mathbf{d}$ be a Turing degree. A computable structure $\mc{A}$ is $\mathbf{d}$-computably categorical if, for every computable copy $\mc{B}$ of $\mc{A}$, there is a $\mathbf{d}$-computable isomorphism between $\mc{A}$ and $\mc{B}$.
\end{definition}

For many structures $\mc{A}$, there is a least degree $\mathbf{d}$ such that $\mc{A}$ is $\mathbf{d}$-computably categorical. In the case of $\mathbb{Q}^\mathbb{N}$, there are actually two computable presentations of $\mathbb{Q}^\mathbb{N}$ such that any isomorphism between the two computes $\mathbf{0}'$. So $\mathbf{0}'$ is exactly the difficulty of computing isomorphisms between copies of $\mathbb{Q}^\mathbb{N}$. This natural idea was formalized by Fokina, Kalimullin, and Miller \cite{FokinaKalimullinMiller10}.

\begin{definition}[Fokina, Kalimullin, and Miller \cite{FokinaKalimullinMiller10}]
A Turing degree $\mathbf{d}$ is said to be the degree of categoricity of a computable structure $\mc{A}$ if $\mathbf{d}$ is the least degree such that $\mc{A}$ is $\mathbf{d}$-computably categorical.
\end{definition}

The Turing degree $\mathbf{0'}$, the level of complexity of the Halting set, is arguably the most natural Turing degree after $\mathbf{0}$, the degree of the computable sets. The \emph{jump} operator in computability theory takes a set $A$ to the halting set relative to $A$, denoted $A'$, and gives rise to a corresponding operator on Turing degrees. Iterating the jump operator $n$ times on $\mathbf{0}$ is called the $n$-th jump of $\mathbf{0}$, denoted $\mathbf{0}^{(n)}$. Taking unions at limit ordinals, one can define $\mathbf{0}^{(\alpha)}$ for any computable ordinal $\alpha$. It turns out that these definitions are very robust, see Ash Knight \cite{AshKnight00}. These Turing degrees are very natural. Just as $\mathbf{0'}$ is able to answer single-quantifier questions, $\mathbf{0}^{(n)}$ can answer questions expressible with $n$ alternating quantifiers.

Fokina, Kalimullin, and Miller showed that every degree that can be realized as a difference of computably enumerable (d.c.e.)\ sets in and above $\mathbf{0}^{(n)}$, for any $n < \omega$, and also the degree $\mathbf{0}^{(\omega)}$, are degrees of categoricity. Later, Csima, Franklin, and Shore \cite{CsimaFranklinShore13} showed that every degree $\mathbf{0}^{(\alpha)}$ for any computable ordinal $\alpha$, and every degree d.c.e.\ in and above $\mathbf{0}^{(\alpha)}$ for any successor ordinal $\alpha$, is a degree of categoricity. Csima and Ng have announced a proof that every $\Delta^0_2$ degree is a degree of categoricity.

It is often the case when trying to prove some property $P(\alpha)$ for ordinals $\alpha$ that things get tricky at limit ordinals. Roughly speaking, if $\alpha$ is a successor ordinal and we know something must happen before $\alpha$, we can safely say it has happened by $\alpha - 1$. For $\alpha$ a limit ordinal, in such a situation there is no canonical choice of earlier ordinal to look at. This is why the methods of \cite{CsimaFranklinShore13} did not work above limit ordinals.

Our main result in this paper is:

\begin{theorem}\label{thm:cea0alpha-degreeofcat}
Let $\alpha$ be a computable limit ordinal and $\mathbf{d}$ a degree c.e.\ in and above $\mathbf{0}^{(\alpha)}$.
There is a computable structure with (strong) degree of categoricity $\mathbf{d}$.
\end{theorem}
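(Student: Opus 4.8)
The plan is to construct a computable structure $\mc{A}$ together with, for each $\mathbf{c}$ c.e.\ in and above $\mathbf{0}^{(\alpha)}$, a computable copy $\mc{B}_{\mathbf{c}}$ so that isomorphisms between $\mc{A}$ and $\mc{B}_{\mathbf{c}}$ code exactly $\mathbf{c}$, and so that moreover $\mc{A}$ is $\mathbf{c}$-computably categorical. Fix a set $C \in \mathbf{c}$ with $C$ c.e.\ in $\mathbf{0}^{(\alpha)}$ and $\mathbf{0}^{(\alpha)} \leq_T C$. The first ingredient is a structural gadget whose back-and-forth complexity sits precisely at level $\alpha$: following the tradition of Ash–Knight and the construction in \cite{CsimaFranklinShore13}, I would use a tree of finite sequences (or a related family of well-founded linear-order-like structures) with a uniformly computable sequence of ``components'' $D_n$, each of which has the property that deciding an isomorphism involving $D_n$ is $\mathbf{0}^{(\alpha)}$-complete in a uniform way. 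Concretely, each $D_n$ should be arranged so that whether a certain canonical pair of elements is identified by an isomorphism encodes one bit of $\mathbf{0}^{(\alpha)}$, while still being such that $\mathbf{0}^{(\alpha)}$ can actually compute the isomorphism (so that the degree of categoricity does not overshoot $\mathbf{0}^{(\alpha)}$ at the structural level).

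The second ingredient handles the c.e.-in step, i.e.\ getting from $\mathbf{0}^{(\alpha)}$ up to an arbitrary $\mathbf{c}$ c.e.\ in it. Since $\alpha$ is a \emph{limit} ordinal, I cannot use the trick of working at $\alpha-1$, so instead I would exploit a computable approximation to $\mathbf{0}^{(\alpha)}$ of the appropriate Ash form: there is a uniformly computable sequence of levels $\alpha_s \nearrow \alpha$ and a computable approximation such that ``$\mathbf{0}^{(\alpha)}$-correct at stage $s$'' is meaningful. Using that $C$ is c.e.\ in $\mathbf{0}^{(\alpha)}$, write $C = \{n : \exists t\, R(n,t)\}$ with $R$ computable in $\mathbf{0}^{(\alpha)}$, and then use the approximation to get a computable ``guessing'' enumeration of $C$ that is correct in the limit. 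For each $n$, attach to the component $D_n$ an additional ``flag'' substructure (e.g.\ the orbit structure of a distinguished element, or the presence/absence of an extra relation on a canonically computable sequence) whose configuration in $\mc{B}_{\mathbf{c}}$ is driven by this guessing enumeration of $C$: the flag is set when $n$ appears to enter $C$, and we also arrange a ``nonuniform correction'' layer so that recovering the true value of $n \in C$ from the isomorphism requires asking $\mathbf{0}^{(\alpha)}$ only finitely many questions (handled by the $D_n$ gadget) plus reading off the final flag value.

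The heart of the argument, and the step I expect to be the main obstacle, is showing the exact lower bound: that \emph{every} isomorphism $f : \mc{A} \to \mc{B}_{\mathbf{c}}$ computes $C$, and conversely that $C$ (equivalently any $X \in \mathbf{c}$) computes \emph{some} isomorphism, with the two bounds matching to give a strong degree of categoricity. The upper bound ``$C$ computes an isomorphism'' should follow from the fact that $C \geq_T \mathbf{0}^{(\alpha)}$ can do the back-and-forth on the $D_n$'s and $C$ itself resolves the flags; the delicate point is uniformity and making sure no extra complexity creeps in past $\mathbf{c}$. The lower bound is where the limit-ordinal difficulty bites: I must design the flag-plus-correction layer so that an arbitrary isomorphism, which may make ``wrong'' finite guesses on infinitely many components, is nonetheless forced to reveal the true membership of each $n$ in $C$ — this is where a careful ``$n$-th component is rigid enough to pin down the $\mathbf{0}^{(\alpha)}$-bits, but flexible enough only in a way that still exposes the c.e.\ flag'' balancing act is needed, together with a verification that the resulting coding is honest (no isomorphism can cheat by exploiting the freedom at limit levels $\alpha_s$). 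Once the coding is shown honest, $\mathbf{d}$-computable categoricity on the family $\{\mc{B}_{\mathbf{c}}\}$ extends to all computable copies of $\mc{A}$ by the usual argument that an arbitrary computable copy is, after a $\mathbf{0}^{(\alpha)}$-computable (hence $\mathbf{c}$-computable) back-and-forth, reduced to one of the $\mc{B}_{\mathbf{c}}$'s, and strongness follows because the minimal degree witnessing categoricity, namely $\mathbf{c}$, is realized already by an isomorphism between two specific computable copies.
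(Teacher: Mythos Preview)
Your outline is essentially the successor-ordinal strategy of \cite{CsimaFranklinShore13}: code each bit $C(n)$ by a ``flag'' substructure attached to a component $D_n$, and read the bit off from any isomorphism. You correctly identify that the limit-ordinal case is the obstacle, but you do not actually propose a mechanism that overcomes it, and the flag idea as stated will not work. The difficulty is concrete: the fact ``$n\in C$'' is $\Sigma^0_1(\mathbf{0}^{(\alpha)})$, i.e.\ essentially $\Sigma^0_{\alpha+1}$, and there is no single level $\beta<\alpha$ at which a back-and-forth gadget can record it. Your ``guessing enumeration'' via levels $\alpha_s\nearrow\alpha$ produces approximations that may change infinitely often from the point of view of any fixed $\beta$, so a flag built at level $\beta$ cannot stabilize to the true value; and since $\alpha$ is a limit there is no top level to absorb the errors. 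This is precisely why \cite{CsimaFranklinShore13} stopped at successors.

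The paper's solution is a genuinely different coding: it does \emph{not} try to encode $C(n)$ at all. Instead it passes through the self-modulus. Since $C$ is c.e.\ in and above $\mathbf{0}^{(\alpha)}$, it has a modulus $f$ which is limitwise monotonic over $\mathbf{0}^{(\alpha)}$, and any function dominating $f$, together with $\mathbf{0}^{(\alpha)}$, computes $C$. Lemma~\ref{lem:main-constr} builds copies $\mc{M},\mc{N}$ so that an isomorphism is forced to compute a function \emph{dominating} $f$ (not $f$ itself, and not $C$). The point is that ``$\exists s\,\Phi^{\emptyset^{(\beta)}}(n,s)>i$'' is $\Sigma^0_\beta$ for each successor $\beta<\alpha$, so one can hang an $\mc{A}_\beta/\mc{E}_\beta$ box at every level $\beta<\alpha$ simultaneously; collectively these boxes force the image of a distinguished element $a_0$ to land at index at least $f(n)$. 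A separate disjoint component with strong degree of categoricity $\mathbf{0}^{(\alpha)}$ (already available from \cite{CsimaFranklinShore13}) ensures any isomorphism also computes $\mathbf{0}^{(\alpha)}$, and the two together give $C$.

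A smaller issue: your last paragraph claims that $\mathbf{d}$-categoricity on the family $\{\mc{B}_{\mathbf{c}}\}$ extends to all computable copies because an arbitrary copy can be $\mathbf{0}^{(\alpha)}$-reduced to some $\mc{B}_{\mathbf{c}}$. There is no such reduction in general; the upper bound must be argued directly for an arbitrary computable copy, which is what the $f\oplus\mathbf{0}^{(\alpha)}$-categoricity clause of Lemma~\ref{lem:main-constr} provides.
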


\noindent This fills in a gap that was missing from \cite{CsimaFranklinShore13} above limit ordinals, making further progress towards Question 5.1 of that paper.

We have not yet explained what a \textit{strong} degree of categoricity is. For a long time, all of the known examples had the following property: If $\mc{A}$ had degree of categoricity $\mathbf{d}$, then there is a copy $\mc{B}$ of $\mc{A}$ such that every isomorphism between $\mc{A}$ and $\mc{B}$ computes $\mathbf{d}$. Thus, we can witness with just two computable copies the fact that $\mathbf{d}$ is the \textit{least} degree such that $\mc{A}$ is $\mathbf{d}$-computably categorical. In this case, we say that $\mc{A}$ has strong degree of categoricity $\mathbf{d}$. Recently, Bazhenov, Kalimullin, and Yamaleev \cite{BazhenovKalimullinYamaleev18} have shown that there is a c.e.\ degree $\mathbf{d}$ and a structure $\mc{A}$ with degree of categoricity $\mathbf{d}$, but $\mathbf{d}$ is not a strong degree of categoricity for $\mc{A}$. Csima and Stephenson \cite{CsimaStephenson} have shown that there is a structure of finite computable dimension that has a degree of categoricity but no strong degree of categoricity.

Recall that the \emph{theory} of a structure is the set of first order formulas true in the structure, and that models of the same theory need not be isomorphic. The \emph{type} of a tuple in a structure is the set of formulas (with the appropriate number of free variables) that the tuple satisfies in the structure. The types of a theory are the types that are realized by models of the theory. A type is called \emph{principal} if there is one formula from which the rest follow. A model of a theory is \emph{prime} if it elementarily embeds into all other models of the theory, and when everything is countable, this is the same as saying that the model only realizes principal types. In a sense, prime structures are the most basic or natural structures. Our second result gives progress towards a question of Bazhenov and Marchuk.

\begin{question*}[Bazhenov and Marchuk \cite{BazhenovMarchuk}]
What can be the degrees of categoricity of computable prime models?
\end{question*}

\noindent A computable prime model---in fact, as \cite{BazhenovMarchuk} shows, a computable homogeneous model---is always $\mathbf{0}^{(\omega+1)}$-categorical, as we can ask $\mathbf{0}^{(\omega + 1)}$ if two tuples satisfy the same type. Bazhenov and Marchuk construct a computable homogeneous model with degree of categoricty $\mathbf{0}^{(\omega + 1)}$. The complexity here is in the structure itself, rather than in the theory. To build a prime model with degree of categoricity $\mathbf{0}^{(\omega + 1)}$, the complexity must be in the theory: If $\mc{A}$ is a computable prime model of a theory $T$, then $\mc{A}$ is $T' \oplus \mathbf{0}^{(\omega)}$-categorical as $T'$ can decide whether a formula is complete, and $\mathbf{0}^{(\omega)}$ can decide whether a formula holds of a tuple in $\mc{A}$. We build a computable prime model with degree of categoricity $\mathbf{0}^{(\omega + 1)}$ (or any other degree c.e.\ in and above $\mathbf{0}^{(\omega)}$), showing that the bound cannot be lowered.

\begin{theorem}\label{thm:main-prime}
Let $\mathbf{d}$ be a degree c.e.\ in and above $\mathbf{0}^{(\omega)}$.
There is a computable prime model $\mc{A}$ with strong degree of categoricity $\mathbf{d}$.
\end{theorem}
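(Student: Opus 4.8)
The plan is to combine the machinery behind Theorem~\ref{thm:cea0alpha-degreeofcat} in the case $\alpha=\omega$ with a choice of building blocks that keeps the constructed structure \emph{atomic}, hence prime. Fix a set $D\in\mathbf{d}$ that is c.e.\ in and above $\mathbf{0}^{(\omega)}$, together with a $\mathbf{0}^{(\omega)}$-computable enumeration $D=\bigcup_s D_s$ (entries permanent) and the nested $\Delta^0_{n}$-approximations to $\mathbf{0}^{(n)}$ that come with a fixed presentation of $\mathbf{0}^{(\omega)}$. We want a computable prime model $\mc{A}$ together with (i) a uniform procedure that, using $D$ as oracle, computes an isomorphism from $\mc{A}$ onto any computable copy, and (ii) one computable copy $\mc{B}\cong\mc{A}$ such that every isomorphism $\mc{A}\to\mc{B}$ computes $D$. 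Since $D\geq_T\mathbf{0}^{(\omega)}$, (i) and (ii) together say that $\mathbf{d}$ is the strong degree of categoricity of $\mc{A}$. As noted in the remark following the statement, any computable prime model is $\Th(\mc{A})'\oplus\mathbf{0}^{(\omega)}$-computably categorical, so to obtain (i) it is enough to arrange that $\Th(\mc{A})'\leq_T D$; thus all of the $\omega$-jump complexity must be carried by the theory rather than by the structure, and the construction is designed accordingly.

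For each level $n$ I would use a uniformly computable supply of relational \emph{gadgets} (finitely many isomorphism types at level $n$), drawn from the $\alpha=\omega$ instance of the construction behind Theorem~\ref{thm:cea0alpha-degreeofcat}, but chosen so that (a) each gadget is an atomic structure, (b) finite disjoint unions of gadgets remain atomic, and (c) the isomorphism type of a gadget placed at a coding location $x$ is controlled by a $\Delta^0_{n}$-amount of information, namely the current state of the approximation at $x$, and gets ``completed'' to a distinguished final type exactly when $x$ enters $D$. A convenient concrete form is a descending chain of unary predicates decorated by further relations, so that the type of a point is generated by a single formula naming its depth and decoration, and the completion event, when $x\in D$, attaches a new vertex witnessed by an existential formula that finitely generates the types of all affected points. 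The structure $\mc{A}$ is then a computable presentation of the disjoint union, over all $n$ and all coding locations, of these gadgets, with multiplicities and the set of occurring final types chosen to reflect $D$; a location never entering $D$ carries the fixed default gadget of its level. One then checks that $\mc{A}$ is atomic: every realized type is the type of a point inside some gadget and is principal by (a)--(b), and no new infinitary type appears in the limit (no point of ``infinite depth'' is ever built), so $\mc{A}$ is prime.

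Granting the construction, (i) is routine: given a computable copy, the oracle $D\equiv_T\Th(\mc{A})'\oplus\mathbf{0}^{(\omega)}$ computes, for each point, its gadget and that gadget's isomorphism type (the underlying data is $\mathbf{0}^{(\omega)}$-level, and which types are completed is read off from $D$ directly), then matches gadgets of equal type in the two copies and amalgamates the resulting finite partial isomorphisms into a total one. For (ii), as in Marker's method, I build $\mc{B}$ so that for each coding location $x$ the gadget is presented in a way that makes $x\in D$ undecidable from $\mc{B}$ alone, while an isomorphism $f\colon\mc{A}\to\mc{B}$ exposes it: the $f$-image of the designated vertices of the gadget at $x$ in $\mc{A}$ reveals whether completion has occurred. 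Running this against all partial computable functions simultaneously yields $\mathbf{d}\leq_T\deg(f)$ for every isomorphism $f$, so, combined with (i), $\mathbf{d}$ is exactly the strong degree of categoricity of $\mc{A}$.

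The main obstacle is the tension between coding a set as high as $\mathbf{0}^{(\omega)}$ and keeping the model prime. Natural codings use $\Pi$-information ---- ``$a$ lies in no gadget of such-and-such type'' ---- whose truth is witnessed by no single formula and so produces non-principal types. The resolution, and the delicate point, is to force every bit of genuine complexity through \emph{finitary, existential} completion events keyed to $x\in D$, so that all types stay finitely generated and the occurrence of a completed gadget is a $\Sigma^0_1(\mathbf{0}^{(\omega)})$ fact (matching $D$ being c.e.\ in $\mathbf{0}^{(\omega)}$), while verifying that the $\omega$-indexed disjoint union smuggles in no limiting non-principal type. Once this is done, ``which complete formula generates this type'' is the only hard question; it is a $\Th(\mc{A})'$-question, and the bookkeeping is arranged so that $\Th(\mc{A})'\leq_T\mathbf{d}$ as required ---- so the structure is prime while its categoricity is as complex as $\mathbf{d}$.
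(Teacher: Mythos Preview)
Your proposal is more a plan than a proof, and the plan has a genuine gap. You propose to code $D$ directly via ``completion events'' at coding locations $x$, with an isomorphism revealing whether $x\in D$ by exposing whether the gadget at $x$ has been completed. But $D$ is $\Sigma^0_1(\mathbf{0}^{(\omega)})$, so the event ``$x\in D$'' depends on unboundedly much of the oracle $\mathbf{0}^{(\omega)}$; you never explain how a gadget that is atomic (hence definable up to isomorphism by a single first-order formula of some fixed finite complexity) can encode this, nor how an isomorphism alone---without already having $\mathbf{0}^{(\omega)}$ in hand---can decode it. Your claim that a level-$n$ gadget is ``controlled by a $\Delta^0_n$-amount of information'' and simultaneously ``gets completed \ldots\ exactly when $x$ enters $D$'' is where the two requirements collide, and your final paragraph names the tension without resolving it: ``force every bit of genuine complexity through finitary, existential completion events'' is the problem statement, not its solution. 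The appeal to ``running this against all partial computable functions'' in the argument for (ii) is likewise unexplained.

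The paper sidesteps this tension by a two-part decomposition you do not have. First, rather than coding $D$ directly, it codes a \emph{self-modulus} $f$ for $D$: a function limitwise monotonic over $\mathbf{0}^{(\omega)}$ such that any function dominating $f$, together with $\mathbf{0}^{(\omega)}$, computes $D$. The structure $\mc{M}$ of Lemma~\ref{lem:main-constr} (with $\alpha=\omega$) attaches to a distinguished element $a_0$ boxes $\mc{M}_{0,m}\cong\mc{A}_m$ for every $m$, while the hard copy $\mc{N}$ arranges that $b_j$ has some box $\mc{N}_{j,m}\cong\mc{E}_m$ for each $j<f(n)$; hence any isomorphism must send $a_0$ to some $b_j$ with $j\geq f(n)$, and reading off the index $j$ is a computable act that yields a function dominating $f$. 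Second, a disjoint prime component with strong degree of categoricity $\mathbf{0}^{(\omega)}$ (Theorem~\ref{thm:prime-0omega}) is adjoined, so that any isomorphism also computes $\mathbf{0}^{(\omega)}$. Together these give $D$. The primeness check is then entirely local: Lemma~\ref{lem:moh} shows the Hirschfeldt--White trees $\mc{A}_n,\mc{E}_n$ are prime and distinguished by elementary $\exists_n$ sentences, and Lemma~\ref{lem:main-constr-prime} uses this to isolate every type in $\mc{N}$. Your proposal substitutes unspecified ``descending chains of unary predicates'' for these trees and does not establish the analogous facts, and it lacks the self-modulus idea that lets the coding be spread across all finite levels while remaining finitary at each.
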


Bazhenov and Marchuk stated in \cite{BazhenovMarchuk} that a careful examination of the structures constructed in \cite{FokinaKalimullinMiller10} shows they are prime models, so that all degrees d.c.e.\ in and above $\mathbf{0}^{(n)}$ for a finite $n$, as well as $\mathbf{0}^{(\omega)}$, are strong degrees of categoricity of prime models. Along the way to proving Theorem \ref{thm:main-prime} we verify in Lemma \ref{lem:moh} that the building blocks used by Csima, Franklin and Shore for their examples in \cite{CsimaFranklinShore13} are prime. This is enough to see that their structures realizing degrees of categoricity less than or equal to $\mathbf{0}^{(\omega)}$ are prime. However, the structure in \cite{CsimaFranklinShore13} with degree of categoricity $\mathbf{0}^{(\omega +1)}$ is not prime. With Theorem \ref{thm:main-prime}, we see that all known degrees of categoricity less than or equal to the $\mathbf{0}^{(\omega+1)}$ bound can be realized by a prime model.

\section{Categoricity Relative to Decidable Models}\label{sec:decidable}

As a warm-up to illustrate the methods used to prove these two theorems, we give a simple proof of a result of Goncharov \cite{Goncharov11} that for every c.e.\ degree $\mathbf{d}$, there is a decidable prime model with degree of categoricity $\mathbf{d}$ with respect to decidable copies. Recall that a structure is said to be decidable if its full elementary diagram is computable. In \cite{Goncharov11}, Goncharov made the following definitions:

\begin{definition}
Let  $\mathbf{d}$ be a Turing degree and $\mc{A}$ a decidable structure. Then $\mc{A}$ is \textit{$\mathbf{d}$-categorical with respect to decidable copies} if for every decidable copy $\mc{B}$ of $\mc{A}$, $\mathbf{d}$ computes an isomorphism between $\mc{A}$ and $\mc{B}$.
\end{definition}

\begin{definition}
Let  $\mathbf{d}$ be a Turing degree and $\mc{A}$ a decidable structure. Then $\mathbf{d}$ is the \textit{degree of categoricity of $\mc{A}$ with respect to decidable copies} if:
\begin{itemize}
	\item $\mc{A}$ is $\mathbf{d}$-categorical with respect to decidable copies, and
	\item whenever $\mc{A}$ is $\mathbf{c}$-categorical with respect to decidable copies, $\mathbf{c} \geq \mathbf{d}$.
\end{itemize}
\end{definition}

It is not hard to see that between any two decidable copies of a prime model, there is a $\mathbf{0}'$-computable isomorphism. Goncharov showed that any c.e.\ degree can be the degree of categoricity with respect to decidable copies of a prime model. We give a different proof, which we think is simpler, and which demonstrates some of the techniques that we will use later.

\begin{theorem}[Goncharov {\cite[Theorem 3]{Goncharov11}}]
Let $\mathbf{d}$ be a c.e.\ degree. Then there is a decidable prime model $\mc{M}$ which has strong degree of categoricity $\mathbf{d}$ with respect to decidable models.
\end{theorem}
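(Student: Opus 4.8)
The plan is to put the complexity into the \emph{theory}: we build a decidable complete theory $T$ (in a computable language) whose prime model $\mc{M}$ is decidable, and so that the difficulty of recognizing which formulas of $T$ are complete — equivalently, of carrying out Vaught's back-and-forth between decidable models of $T$ — is exactly $\mathbf{d}$.

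Fix a computably enumerable set $D$ of degree $\mathbf{d}$ together with a computable enumeration $D = \bigcup_s D_s$, with at most one number entering $D$ at each stage. We construct $T$ and $\mc{M}$ together in stages, only ever adding new axioms to $T$ and new elements and new atomic facts to $\mc{M}$, never retracting anything; this additivity is what will keep both $T$ (its deductive closure) and $\mc{M}$ (its elementary diagram) computable in the limit. For each $n$ we reserve a coding location: a uniformly given descending chain of formulas $\varphi_{n,0}(\bar x) \vdash \varphi_{n,1}(\bar x) \vdash \cdots$, each consistent with the current $T$. As long as $n \notin D_s$ we keep $\varphi_{n,0}$ complete, with its principal type realized by a tuple placed into $\mc{M}$. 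If $n$ enters $D$ at some stage $s$, we add axioms that make $\varphi_{n,0}$ split — so its type becomes non-principal — while pinning down one refinement to be and to remain complete, and we additively extend the corresponding tuple in $\mc{M}$ to realize that refinement. Thus ``$\varphi_{n,0}$ is complete in $T$'' holds exactly when $n \notin D$, and, with care taken that every tuple of $\mc{M}$ ultimately realizes a principal type, $\mc{M}$ is the prime model of the final theory $T$; in particular $\mc{M}$ is prime.

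For the upper bound: with the oracle $D$ we can decide, uniformly, which coding formulas are complete, and the construction is arranged so that this suffices to decide completeness of an arbitrary formula of $T$. Given two decidable copies $\mc{B}, \mc{C}$ of $\mc{M}$, we run Vaught's back-and-forth — at each step extending the finite partial isomorphism by computing the complete formula isolating the type of the current tuple and then searching for a realization of it in the other copy (a first-order, hence decidable, search) — and obtain a $\mathbf{d}$-computable isomorphism. So $\mc{M}$ is $\mathbf{d}$-categorical with respect to decidable copies. For the lower bound, and the ``strong'' conclusion: we build a particular decidable copy $\mc{N}$ of $\mc{M}$ in which the realizations of the coding types are displaced, so that any isomorphism $f \colon \mc{M} \to \mc{N}$ is forced, for each $n$, to locate the $\mc{N}$-element playing the role of the $n$th coding tuple, and a fixed first-order test — decidable in $\mc{N}$ — applied to $f$ of that tuple returns the answer to ``is $n \in D$?''. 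Hence $D \leq_T f$ for every isomorphism $f$, so $\mathbf{d}$ is a lower bound for, hence equals, and is a strong degree of categoricity of $\mc{M}$ with respect to decidable copies.

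The main obstacle is the simultaneous bookkeeping, throughout the stage-by-stage construction, of all of: decidability of $T$; decidability of $\mc{M}$; the coding equivalence ``$\varphi_{n,0}$ complete $\iff n \notin D$''; primeness (that $\mc{M}$ realizes only principal types, even though tuples are placed before we know whether their isolating formula will survive); and enough rigidity in the companion copy $\mc{N}$ that \emph{every} isomorphism decodes $D$, not just a favorable one. These requirements pull against each other — keeping the type space and the two models decidable limits how much can be hidden, while the lower bound needs the hiding to be unavoidable — so the crux is designing one family of coding formulas together with one companion copy $\mc{N}$ that meets them all at once. Given such a framework, the four verifications above are routine, and this technique of coding into the type space of the theory is what the later theorems scale up in order to handle higher levels of complexity.
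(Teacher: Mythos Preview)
Your proposal is a strategy rather than a proof: you identify that coding $D$ into ``which formulas are complete'' would suffice, list the simultaneous requirements (decidability of $T$ and of $\mc{M}$, primeness, rigidity of $\mc{N}$), and then say ``the crux is designing one family of coding formulas together with one companion copy $\mc{N}$ that meets them all at once.'' That crux is the entire content of the argument, and you have not supplied it: there is no language, no concrete $\varphi_{n,i}$, no mechanism for splitting $\varphi_{n,0}$ while keeping $T$ complete and both diagrams decidable, and no construction of $\mc{N}$. A Henkin-style build with this many interacting constraints is genuinely delicate, and nothing you wrote shows the constraints are jointly satisfiable.

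The paper's proof sidesteps all of this with an explicit, elementary construction; no management of the type space is needed. The language has unary predicates $R_n$ (sorts) and $U_s$ (stages). Both $\mc{M}$ and $\mc{N}$ are disjoint unions over $n$ of sorts $\mc{M}_n$, $\mc{N}_n$, each an infinite set carrying the $U_s$. In $\mc{M}_n$ a single element $a_0$ satisfies $U_s$ iff $n$ enters $D$ at exactly stage $s$, and all other $a_i$ satisfy no $U_s$; in $\mc{N}_n$ the element $b_s$ satisfies $U_s$ under that same condition, and nothing else holds. Decidability of both copies reduces to truth in finite substructures, since any formula mentions only finitely many $U_s$. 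Each sort is a model of an $\aleph_0$-categorical theory, so primeness is automatic. For the lower bound, any isomorphism sends $a_0 \in \mc{M}_n$ to some $b_s \in \mc{N}_n$, and then $n \in D \iff n \in D_s$. For the upper bound, $D$ tells you whether and at which $s$ some $U_s$ holds in the $n$th sort, after which the isomorphism is trivial to build. The paper gives this proof precisely because the same shape --- replacing the predicates $U_s$ by back-and-forth trees $\mc{A}_\beta$, $\mc{E}_\beta$ --- is what drives the later results above $\mathbf{0}^{(\alpha)}$; your type-space framework does not connect to that.
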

\begin{proof}
Let $D \in \mathbf{d}$ be a c.e.\ set. We will construct the structures $\mc{M}$ and $\mc{N}$. They are the disjoint union of infinitely many structures $\mc{M}_n$ and $\mc{N}_n$, with $\mc{M}_n$ and $\mc{N}_n$ picked out by unary relations $R_n$. The $n$th sort will code whether $n \in D$. Fix $n$. $\mc{M}_n$ will have infinitely many elements $(a_i)_{i \in \omega}$. There will be infinitely many unary relations $(U_\ell)_{\ell \in \omega}$ defined on $\mc{M}_n$ so that:
\[ a_0 \in U_s \Longleftrightarrow n \in D_{\text{at $s$}}\]
where $n \in D_{\text{at $s$}}$ means that $n$ enters $D$ at exactly stage $s$, and
\[ a_i \notin U_s \text{ for $i > 0$ and all $s$}.\]
Similarly, $\mc{N}_n$ will have infinitely many elements $(b_i)_{i \in \omega}$ with the unary relations defined so that:
\[ b_i \in U_s \Longleftrightarrow \text{$i = s$ and $n \in D_{\text{at $s$}}$}.\]
It is easy to see that we can build computable copies of $\mc{M}$ and $\mc{N}$. These copies are in fact decidable.

\begin{claim}
$\mc{M}$ and $\mc{N}$ are decidable.
\end{claim}
\begin{proof}
Given a formula $\varphi(x_1,\ldots,x_n)$ with $k$ quantifiers and $a_{i_1},\ldots,a_{i_n} \in \mc{M}$, it is not hard to see that $\mc{M} \models \varphi(a_{i_1},\ldots,a_{i_n})$ if and only if the finite substructure $\mc{M}'$ of $\mc{M}$ whose domain consists of $a_1,\ldots,a_{k+n+1}$ and $a_{i_1},\ldots,a_{i_n}$ also has $\mc{M}' \models \varphi(a_{i_1},\ldots,a_{i_n})$. Thus $\mc{M}$ is decidable.

For $\mc{N}$, suppose we have a formula $\varphi(x_1,\ldots,x_n)$ with at most $k$ quantifiers and which uses only some subset of the relations $U_0,\ldots,U_k$. Let $b_{i_1},\ldots,b_{i_n}$ be elements of $\mc{N}$. Then $\mc{N} \models \varphi(b_{i_1},\ldots,b_{i_n})$ if and only if the finite substructure $\mc{N}'$ of $\mc{M}$ whose domain consists of $b_1,\ldots,b_{k+n+1}$ and $b_{i_1},\ldots,b_{i_n}$ also has $\mc{N}' \models \varphi(b_{i_1},\ldots,b_{i_n})$. Thus $\mc{N}$ is decidable.
\end{proof}

\begin{claim}
$\mc{M}$ and $\mc{N}$ are isomorphic.
\end{claim}
\begin{proof}
It suffices to show that for each $n$, $\mc{M}_n$ and $\mc{N}_n$ are isomorphic. If $n \notin D$, then $a_i \mapsto b_i$ induces an isomorphism between $\mc{M}_n$ and $\mc{N}_n$. If $n \in D_{\text{at $s$}}$, then the map\begin{align*}
a_0 &\mapsto b_{s} & \\
a_{i} &\mapsto b_{i-1} && \text{when $0< i \leq s$} \\
a_i &\mapsto b_i && \text{when $i > s$} \\
\end{align*}
is an isomorphism between $\mc{M}_n$ and $\mc{N}_n$.
\end{proof}

\begin{claim}
$\mc{M}$ and $\mc{N}$ are prime.
\end{claim}
\begin{proof}
It suffices to show that each $\mc{M}_n$ and $\mc{N}_n$ are prime, since these structures are determined inside $\mc{M}$ and $\mc{N}$ uniquely by the relation $R_n$. It is not hard to see that $\mc{M}_n$ and $\mc{N}_n$ are models of an $\aleph_0$-categorical theory, and hence are prime.
\end{proof}

\begin{claim}
Any isomorphism between $\mc{M}$ and $\mc{N}$ can compute $D$.
\end{claim}
\begin{proof}
Let $g$ be an isomorphism between $\mc{M}$ and $\mc{N}$. For each $n$, let $(a_i)_{i \in \omega}$ and $(b_i)_{i \in \omega}$ be the elements in the definition of $\mc{M}_n$ and $\mc{N}_n$. Let $s$ be such that $g(a_0) = b_s$. Then $n \in D$ if and only if $n \in D_s$.
\end{proof}

\begin{claim}
Given a computable copy $\widetilde{\mc{M}}$ of $\mc{M}$, $D$ can compute an isomorphism between $\mc{M}$ and $\widetilde{\mc{M}}$.
\end{claim}
\begin{proof}
For each $n$, let $\widetilde{\mc{M}}_n$ be the structure with domain $R_n$ in $\widetilde{\mc{M}}$. It suffices to compute an isomorphism $g$ between $\mc{M}_n$ and $\widetilde{\mc{M}}_n$ for each $n$. Let $(c_i)_{i \in \omega}$ be the elements of $\widetilde{\mc{M}}_n$. If $n \notin D$, no relation $U_j$ holds of any of the the elements $(a_i)_{i \in \omega}$ or $(c_i)_{i \in \omega}$. So $a_i \mapsto c_i$ is an isomorphism. On the other hand, if $n \in D$, then for some unique $s$, $a_0 \in U_s$. We can look for $c_k$ such that $c_k \in U_s$. Map $a_0$ to $c_k$; map each other $a_i$ to some other $c_i$.
\end{proof}

These claims complete the proof of the theorem.
\end{proof}

\section{Back-and-forth Trees}

Fix a path through $O$. We will identify computable ordinals with their notation on this path. We will always first fix a limit ordinal $\alpha$ and work below it. Recall that one can decide effectively whether a given computable ordinal is a limit ordinal or a successor ordinal. For each limit ordinal $\beta < \alpha$, fix a fundamental sequence for $\beta$, that is, an increasing sequence of successor ordinals whose limit is $\beta$.

Hirschfeldt and White defined, for each successor ordinal $\beta$, a pair of trees $\mc{A}_\beta$ and $\mc{E}_\beta$ which can be differentiated exactly by $\beta$ jumps. These trees are called \emph{back-and-forth trees}.

\begin{definition}[{\cite[Definition 3.1]{HirschfeldtWhite02}}]
Back-and-forth trees are defined recursively in $\beta$. We view these as structures in the language of graphs with the root node distinguished.

We take $\mathcal{A}_1$ to be the tree with just a root node and no children, and we take $\mathcal{E}_1$ to be the tree where the root node has infinitely many children, none of which have children. See \cref{fig:baf-tree-1}. We say that these trees have \emph{back-and-forth rank} 1.

Suppose $\beta$ is a successor ordinal. Define $\mathcal{A}_{\beta+1}$ as a root node with infinitely many children, each the root of a copy of $\mathcal{E}_\beta$, and define $\mathcal{E}_{\beta+1}$ as a root node with infinitely many children, each the root of a copy of $\mathcal{A}_\beta$, and also infinitely many other children, each the root of a copy of $\mathcal{E}_\beta$. See \cref{fig:baf-tree-succ}. These trees have back-and-forth rank $\beta+1$.

Now suppose $\beta$ is a non-zero limit ordinal, and let $\beta_0, \beta_1, \ldots$ be a fundamental sequence of successor ordinals for $\beta$, that is, a sequence of successor ordinals below $\beta$ with limit $\beta$. We first define a family of helper trees $\mathcal{L}_{\beta,k}$ where $k \in \omega \cup \{\infty\}$. Define $\mathcal{L}_{\beta,\infty}$ to consist of a root node whose children are root nodes of copies of $\mathcal{A}_{\beta_i}$, and such that each copy appears exactly once as a child. For $k \in \omega$, $\mathcal{L}_{\beta,k}$ has a root node whose children are root nodes of copies of $\mathcal{A}_{\beta_0}, \ldots, \mathcal{A}_{\beta_k}, \mathcal{E}_{\beta_{k+1}}, \mathcal{E}_{\beta_{k+2}}, \ldots$ where again each copy appears exactly once as a child. Such trees are shown in \cref{fig:baf-trees-L}. We say these trees have back-and-forth rank $\beta$.

We can now define $\mathcal{A}_{\beta+1}$ and $\mathcal{E}_{\beta+1}$ for the non-zero limit ordinal $\beta$. For $\mathcal{A}_{\beta+1}$, we have a root node with infinitely many children, each the root node of a copy of $\mathcal{L}_{\beta,k}$ such that for each $k \in \omega$, $\mathcal{L}_{\beta,k}$ appears infinitely many times. The definition of $\mathcal{E}_{\beta+1}$ is similar, except $k$ is drawn from $\omega \cup \{\infty\}$. See \cref{fig:baf-tree-limit}. These trees have back-and-forth rank $\beta+1$.
\end{definition}

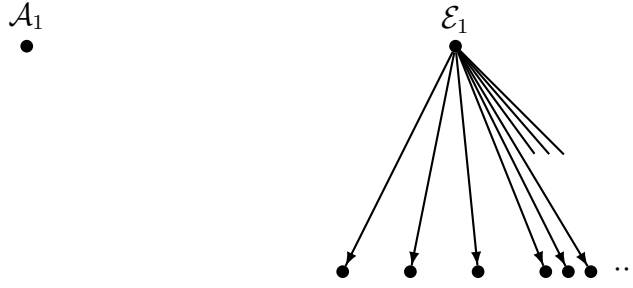
\begin{figure}
\centering
\begin{tikzpicture}[
  thick,
  every node/.style={circle,draw=black,fill=black,inner sep=0pt, minimum width=4pt},
  decoration={
    markings,
    mark=at position 1
    with {\arrow[black]{latex};}}
  ]

  \node (n0) at (-4.2,3.0) [label=above:$\mathcal{A}_1$] {};
  \node (n1) at (1.5,3.0) [label=above:$\mathcal{E}_1$] {};
  \node (n2) at (0.0,0.0) {};
  \node (n3) at (0.9,0.0) {};
  \node (n4) at (1.8,0.0) {};
  \node (n5) at (2.7,0.0) {};
  \node (n6) at (3.0,0.0) {};
  \node (n7) at (3.3,0.0) [label={[label distance=2mm]right:$\cdots$}] {};
  \node (n8) [draw=none,fill=none] at (2.6, 1.5) {};
  \node (n9) [draw=none,fill=none] at (2.8, 1.5) {};
  \node (n10) [draw=none,fill=none] at (3.0, 1.5) {};

  \foreach \from/\to in {n1/n2,n1/n3,n1/n4,n1/n5,n1/n6,n1/n7}
    \draw[postaction={decorate}] (\from) -- (\to);
  \foreach \from/\to in {n1/n8,n1/n9,n1/n10}
    \draw (\from) -- (\to);
\end{tikzpicture}
\caption{$\mathcal{A}_1$ and $\mathcal{E}_1$\label{fig:baf-tree-1}}
\end{figure}

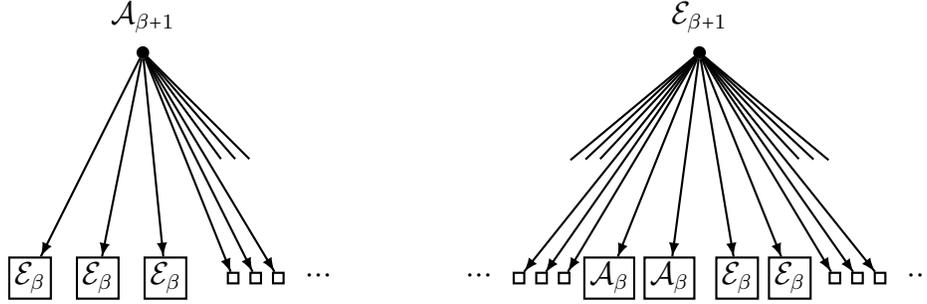
\begin{figure}
\centering
\begin{tikzpicture}[
  thick,
  every node/.style={draw=black,inner sep=2pt, minimum width=4pt, minimum height=4pt},
  decoration={
    markings,
    mark=at position 1
    with {\arrow[black]{latex};}}
  ]

  \node (n0) [circle,fill=black,inner sep=0pt] at (1.5,3.0) [label={[label distance=1mm]above:$\mathcal{A}_{\beta+1}$}] {};
  \node (n1) [circle,fill=black,inner sep=0pt] at (8.9,3.0) [label={[label distance=1mm]above:$\mathcal{E}_{\beta+1}$}] {};
  \node (n2) at (7.7,0.0) {$\mathcal{A}_{\beta}$};
  \node (n3) at (8.5,0.0) {$\mathcal{A}_{\beta}$};
  \node (n4) at (9.4,0.0) {$\mathcal{E}_{\beta}$};
  \node (n5) at (10.7,0.0) {};
  \node (n6) at (11.0,0.0) {};
  \node (n7) at (11.3,0.0) [label={[label distance=2mm]right:$\cdots$}] {};
  \node (n8) at (1.8,0.0) {$\mathcal{E}_{\beta}$};
  \node (n9) at (0.9,0.0) {$\mathcal{E}_{\beta}$};
  \node (n10) at (0.0,0.0) {$\mathcal{E}_{\beta}$};
  \node (n11) at (3.0,0.0) {};
  \node (n12) at (3.3,0.0) [label={[label distance=2mm]right:$\cdots$}] {};
  \node (n13) at (2.7,0.0) {};
  \node (n14) at (10.1,0.0) {$\mathcal{E}_{\beta}$};
  \node (n15) at (7.1,0.0) {};
  \node (n16) at (6.8,0.0) {};
  \node (n17) at (6.5,0.0) [label={[label distance=2mm]left:$\cdots$}] {};
  \node (n18) [draw=none,fill=none] at (10.3, 1.5) {};
  \node (n19) [draw=none,fill=none] at (10.5, 1.5) {};
  \node (n20) [draw=none,fill=none] at (10.7, 1.5) {};
  \node (n21) [draw=none,fill=none] at (2.6, 1.5) {};
  \node (n22) [draw=none,fill=none] at (2.8, 1.5) {};
  \node (n23) [draw=none,fill=none] at (3.0, 1.5) {};
  \node (n24) [draw=none,fill=none] at (7.1, 1.5) {};
  \node (n25) [draw=none,fill=none] at (7.3, 1.5) {};
  \node (n26) [draw=none,fill=none] at (7.5, 1.5) {};

  \foreach \from/\to in {n1/n2,n1/n3,n1/n4,n1/n5,n1/n6,n1/n7,n0/n10,n0/n9,n0/n8,n0/n11,n0/n12,n0/n13,n1/n14,n1/n17,n1/n16,n1/n15}
    \draw[postaction={decorate}] (\from) -- (\to);
  \foreach \from/\to in {n1/n18,n1/n19,n1/n20,n0/n21,n0/n22,n0/n23,n1/n24,n1/n25,n1/n26}
    \draw (\from) -- (\to);
\end{tikzpicture}
\caption{$\mathcal{A}_{\beta+1}$ and $\mathcal{E}_{\beta+1}$ when $\beta$ is a successor ordinal.\label{fig:baf-tree-succ}}
\end{figure}

\begin{figure}
\centering
\begin{tikzpicture}[
  thick,
  every node/.style={draw=black,inner sep=2pt, minimum width=4pt, minimum height=4pt},
  decoration={
    markings,
    mark=at position 1
    with {\arrow[black]{latex};}}
  ]

  \node (n0) [circle,fill=black,inner sep=0pt] at (1.5,3.0) [label={[label distance=1mm]above:$\mathcal{L}_{\beta,\infty}$}] {};
  \node (n1) [circle,fill=black,inner sep=0pt] at (8.9,3.0) [label={[label distance=1mm]above:$\mathcal{L}_{\beta,k}$}] {};
  \node (n2) at (5.5,0.0) {$\mathcal{A}_{\beta_0}$};
  \node (n3) at (7.6,0.0) {$\mathcal{A}_{\beta_k}$};
  \node (n4) at (8.7,0.0) {$\mathcal{E}_{\beta_{k+1}}$};
  \node (n5) at (10.7,0.0) {};
  \node (n6) at (11.0,0.0) {};
  \node (n7) at (11.3,0.0) [label={[label distance=2mm]right:$\cdots$}] {};
  \node (n8) at (1.8,0.0) {$\mathcal{A}_{\beta_2}$};
  \node (n9) at (0.9,0.0) {$\mathcal{A}_{\beta_1}$};
  \node (n10) at (0.0,0.0) {$\mathcal{A}_{\beta_0}$};
  \node (n11) at (3.0,0.0) {};
  \node (n12) at (3.3,0.0) [label={[label distance=2mm]right:$\cdots$}] {};
  \node (n13) at (2.7,0.0) {};
  \node (n14) at (9.9,0.0) {$\mathcal{E}_{\beta_{k+2}}$};
  \node (n15) at (6.8,0.0) {};
  \node (n16) at (6.5,0.0) {};
  \node (n17) at (6.2,0.0) {};
  \node (n18) [draw=none,fill=none] at (2.6, 1.5) {};
  \node (n19) [draw=none,fill=none] at (2.8, 1.5) {};
  \node (n20) [draw=none,fill=none] at (3.0, 1.5) {};
  \node (n21) [draw=none,fill=none] at (10.3, 1.5) {};
  \node (n22) [draw=none,fill=none] at (10.5, 1.5) {};
  \node (n23) [draw=none,fill=none] at (10.7, 1.5) {};

  \foreach \from/\to in {n1/n2,n1/n3,n1/n4,n1/n5,n1/n6,n1/n7,n0/n10,n0/n9,n0/n8,n0/n11,n0/n12,n0/n13,n1/n14,n1/n17,n1/n16,n1/n15}
    \draw[postaction={decorate}] (\from) -- (\to);
  \foreach \from/\to in {n0/n18,n0/n19,n0/n20,n1/n21,n1/n22,n1/n23}
    \draw (\from) -- (\to);
\end{tikzpicture}
\caption{Helper trees $\mathcal{L}_{\beta,\infty}$ and $\mathcal{L}_{\beta,k}$ for $k \in \omega$ for the non-zero limit ordinal $\beta$. \label{fig:baf-trees-L}}
\end{figure}
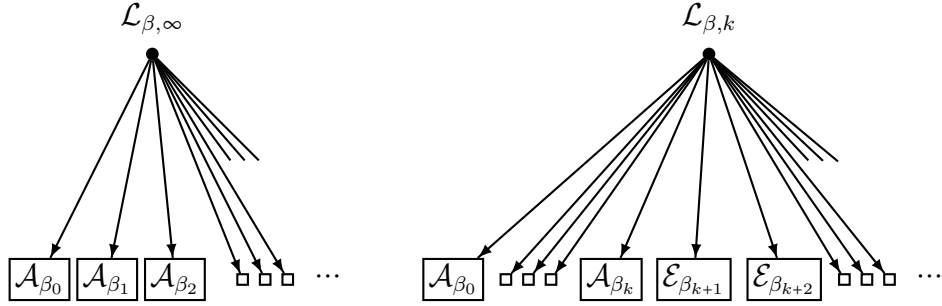

\begin{figure}
\centering
\begin{tikzpicture}[
  thick,
  every node/.style={draw=black,inner sep=2pt, minimum width=4pt, minimum height=4pt},
  decoration={
    markings,
    mark=at position 1
    with {\arrow[black]{latex};}}
  ]

  \node (n0) [circle,fill=black,inner sep=0pt] at (2.9,3.0) [label={[label distance=1mm]above:$\mathcal{A}_{\beta+1}$}] {};
  \node (n1) [circle,fill=black,inner sep=0pt] at (10.4,3.0) [label={[label distance=1mm]above:$\mathcal{E}_{\beta+1}$}] {};
  \node (n2) at (8.6,0.0) {};
  \node (n3) at (10.6,0.0) {$\mathcal{L}_{\beta,0}$};
  \node (n4) at (11.2,0.0) {};
  \node (n5) at (13.0,0.0) {};
  \node (n6) at (13.3,0.0) {};
  \node (n7) at (13.6,0.0) [label={[label distance=1mm]right:$\cdots$}] {};
  \node (n8) at (2.6,0.0) {$\mathcal{L}_{\beta,1}$};
  \node (n9) at (0.6,0.0) {};
  \node (n10) at (0.0,0.0) {$\mathcal{L}_{\beta,0}$};
  \node (n11) at (3.7,0.0) {};
  \node (n12) at (4.0,0.0) [label={[label distance=1mm]right:$\cdots$}] {};
  \node (n13) at (3.4,0.0) {};
  \node (n14) at (11.8,0.0) [label={[label distance=1mm]right:$\cdots$}] {};
  \node (n15) at (9.2,0.0) [label={[label distance=1mm]right:$\cdots$}] {};
  \node (n16) at (8.9,0.0) {};
  \node (n17) at (7.8,0.0) {$\mathcal{L}_{\beta,\infty}$};
  \node (n18) at (11.5,0.0) {};
  \node (n19) at (0.9,0.0) {};
  \node (n20) at (1.2,0.0) [label={[label distance=1mm]right:$\cdots$}] {};
  \node (n21) at (5.7,0.0) [label={[label distance=1mm]right:$\cdots$}] {};
  \node (n22) at (5.1,0.0) {};
  \node (n23) at (5.4,0.0) {};
  \node (n24) [draw=none,fill=none] at (2.2, 1.5) {};
  \node (n25) [draw=none,fill=none] at (2.4, 1.5) {};
  \node (n26) [draw=none,fill=none] at (2.6, 1.5) {};
  \node (n27) [draw=none,fill=none] at (3.6, 1.5) {};
  \node (n28) [draw=none,fill=none] at (3.8, 1.5) {};
  \node (n29) [draw=none,fill=none] at (4.5, 1.5) {};
  \node (n30) [draw=none,fill=none] at (4.7, 1.5) {};
  \node (n31) [draw=none,fill=none] at (4.9, 1.5) {};
  \node (n32) [draw=none,fill=none] at (10.0, 1.5) {};
  \node (n33) [draw=none,fill=none] at (10.2, 1.5) {};
  \node (n34) [draw=none,fill=none] at (10.4, 1.5) {};
  \node (n35) [draw=none,fill=none] at (11.3, 1.5) {};
  \node (n36) [draw=none,fill=none] at (11.5, 1.5) {};

  \node (n37) [draw=none,fill=none] at (12.2, 1.5) {};
  \node (n38) [draw=none,fill=none] at (12.4, 1.5) {};
  \node (n39) [draw=none,fill=none] at (12.6, 1.5) {};

  \foreach \from/\to in {n1/n2,n1/n3,n1/n4,n1/n5,n1/n6,n1/n7,n0/n10,n0/n9,n0/n8,n0/n11,n0/n12,n0/n13,n1/n14,n1/n17,n1/n16,n1/n15,n1/n18,n0/n19,n0/n20,n0/n21,n0/n22,n0/n23}
    \draw[postaction={decorate}] (\from) -- (\to);
  \foreach \from/\to in {n0/n24,n0/n25,n0/n26,n0/n27,n0/n28,n0/n29,n0/n30,n0/n31,n1/n32,n1/n33,n1/n34,n1/n35,n1/n36,n1/n37,n1/n38,n1/n39}
    \draw (\from) -- (\to);
\end{tikzpicture}
\caption{$\mathcal{A}_{\beta+1}$ and $\mathcal{E}_{\beta+1}$ for the non-zero limit ordinal $\beta$.\label{fig:baf-tree-limit}}
\end{figure}
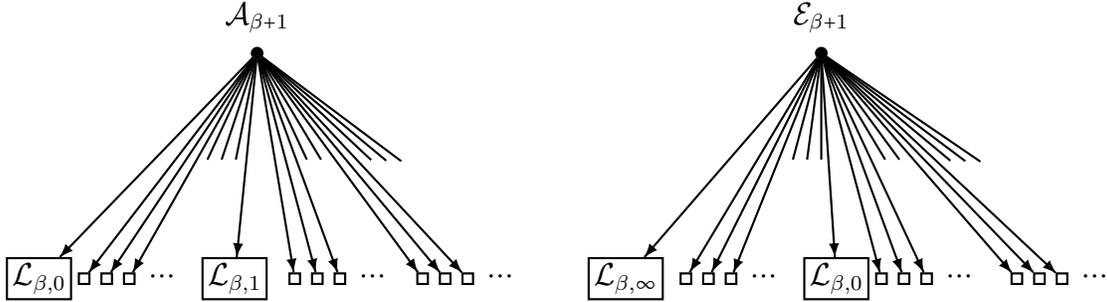

The next two lemmas piece together the facts that we will need about the back-and-forth trees, first for arbitrary $\beta$, and second some additional properties for finite $\beta$ in particular. These facts come from \cite{HirschfeldtWhite02} and \cite{CsimaFranklinShore13}.

\begin{lemma}\label{lem:michael}
Let $\alpha$ be a computable ordinal. For a successor ordinal $\beta < \alpha$, the structures $\mc{A}_\beta$ and $\mc{E}_\beta$ satisfy the following properties:
\begin{enumerate}
	\item Uniformly in $\beta$ and an index for a $\Sigma^0_\beta$ set $S$, there is a computable sequence of structures $\mc{C}_x$ such that
\[ x \in S \Longleftrightarrow \mc{C}_x \cong \mc{E}_\beta \quad \text{ and } \quad x \notin S \Longleftrightarrow \mc{C}_x \cong \mc{A}_\beta .\]
	\item Uniformly in $\beta$, there is a $\Sigma^0_\beta$ sentence $\varphi$ such that $\mc{E}_\beta \models \varphi$ and $\mc{A}_\beta \nmodels \varphi$.
	\item $\mc{A}_\beta$ and $\mc{E}_\beta$ are uniformly $\mathbf{0}^{(\beta)}$-categorical.
\end{enumerate}
\end{lemma}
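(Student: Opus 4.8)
The plan is to prove the three properties of Lemma~\ref{lem:michael} simultaneously by induction on successor ordinals $\beta < \alpha$, using the recursive definition of the back-and-forth trees. The base case $\beta = 1$ is direct: for (1), given an index for a $\Sigma^0_1$ set $S$, let $\mc{C}_x$ be the tree whose root has a child (making it look like $\mc{E}_1$) exactly when a witness to $x \in S$ appears, and otherwise stays as $\mc{A}_1$; since $\mc{E}_1$ is a root with infinitely many leaves and $\mc{A}_1$ is a lone root, we actually attach infinitely many children once a witness shows up, which is clearly uniformly computable. For (2), the sentence ``the root has a child'' is $\Sigma^0_1$ and distinguishes $\mc{E}_1$ from $\mc{A}_1$. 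For (3), $\mathbf{0}'$ can decide of each node whether it is the root and whether it has children, so it can build an isomorphism between any two copies of $\mc{A}_1$ (trivial) or of $\mc{E}_1$ (match up the leaves).

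For the successor step, I would handle two cases according to whether $\beta$ itself is a successor or a limit, mirroring the two clauses of the definition. In both cases the engine for property (1) is the same Ash--Knight-style ``true stages'' / iterated limit machinery: a $\Sigma^0_{\beta+1}$ set $S$ can be written as $x \in S \iff \exists n\, R(x,n)$ where $R$ is $\Pi^0_\beta$, and more usefully one uses the normal form giving a $\mathbf{0}^{(\beta)}$-computable approximation so that membership is witnessed by which of the subtrees stabilize to $\mc{A}$-type versus $\mc{E}$-type. Concretely, for $\beta$ a successor, to build $\mc{C}_x$ one attaches infinitely many children to the root, and to the $i$th child one attaches (by the inductive hypothesis (1) applied to an appropriate $\Sigma^0_\beta$ or $\Pi^0_\beta$ condition) a tree that is a copy of $\mc{E}_\beta$ or $\mc{A}_\beta$; arranging that infinitely many children are $\mc{A}_\beta$-copies iff $x \notin S$ makes $\mc{C}_x \cong \mc{A}_{\beta+1}$ when $x\notin S$ and $\cong \mc{E}_{\beta+1}$ when $x \in S$. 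For $\beta$ a limit with fundamental sequence $\beta_0 < \beta_1 < \cdots$, one does the analogous thing one level down through the helper trees $\mc{L}_{\beta,k}$: the relevant sub-condition for each $\beta_i$ is $\Sigma^0_{\beta_i}$, uniformly in $i$, and the inductive hypothesis applies to each $\beta_i < \beta \le \alpha$. The key point enabling the limit case is precisely that the fundamental sequence is fixed in advance, so ``which $\mc{L}_{\beta,k}$'' and ``which $\mc{A}_{\beta_i}$ vs $\mc{E}_{\beta_i}$'' can be driven by a single effective transfinite approximation.

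For property (2) in the successor step I would build $\varphi$ by relativizing the inductive sentence: if $\psi$ is the $\Sigma^0_\beta$ (or, for each $i$, $\Sigma^0_{\beta_i}$) sentence distinguishing $\mc{E}_\beta$ from $\mc{A}_\beta$, then a sentence asserting roughly ``some/every child of the root satisfies the subtree-version of $\psi$'' — with the quantifier pattern chosen to match the definition of $\mc{A}_{\beta+1}$ versus $\mc{E}_{\beta+1}$ — is $\Sigma^0_{\beta+1}$ and holds in $\mc{E}_{\beta+1}$ but not $\mc{A}_{\beta+1}$; in the limit case one needs a sentence quantifying over which $\beta_i$ a given helper-subtree is built from, which again is expressible with the allotted quantifier complexity because the fundamental sequence is fixed. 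For property (3), $\mathbf{0}^{(\beta+1)}$ can, by the inductive hypothesis, identify the isomorphism type of every subtree hanging off the root (each such type is $\mathbf{0}^{(\beta)}$-decidable and there are $\Sigma^0_{\beta+1}/\Pi^0_{\beta+1}$ facts needed to sort children into finitely/infinitely many of each type), and then it can greedily match children of the same type between two copies and recurse into the matched subtrees using the inductive $\mathbf{0}^{(\beta)}$-categoricity; a standard bookkeeping argument makes this into a genuine isomorphism, and uniformity is retained throughout. The main obstacle, and the place to be careful, is the limit case of property (1): getting a genuinely \emph{uniform} computable sequence $\mc{C}_x$ requires threading the $\Sigma^0_\beta$-approximation of $S$ through the doubly-indexed family $\mc{L}_{\beta,k}$ so that the ``final'' value of $k$ (including the value $\infty$) is correctly produced in the limit, and checking that the resulting tree really is isomorphic to $\mc{A}_{\beta+1}$ or $\mc{E}_{\beta+1}$ rather than some near-miss — this is exactly the subtlety that the fixed fundamental sequence is designed to resolve, and it is essentially the content one imports from \cite{HirschfeldtWhite02}.
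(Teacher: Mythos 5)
Your approach is genuinely different from the paper's. The paper does not reprove these facts at all: its proof of this lemma is a three-line citation, taking the sequence $(\mc{C}_x)$ in (1) directly from Proposition~3.2 of \cite{HirschfeldtWhite02}, the distinguishing $\Sigma^0_\beta$ sentence in (2) from Lemma~3.5 of \cite{HirschfeldtWhite02} (evaluated at the root of $\mc{E}_\beta$), and the uniform $\mathbf{0}^{(\beta)}$-categoricity in (3) from Corollary~2.6 of \cite{CsimaFranklinShore13}. What you are sketching is, in effect, a reproof of the Hirschfeldt--White machinery from scratch by simultaneous transfinite induction. That is a legitimate and more self-contained route, and your overall architecture (induct on successor ordinals, split the step by whether the predecessor is a successor or a limit, use (2)+(3) for the level below to sort and match children, fix the fundamental sequence once and for all to make the limit case uniform) is the right one.

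That said, two points in your successor-step construction for (1) are off and would need repair before this counts as a proof. First, a sign error: you say to arrange ``infinitely many children are $\mc{A}_\beta$-copies iff $x \notin S$.'' But $\mc{A}_{\beta+1}$ has \emph{only} $\mc{E}_\beta$ children, while it is $\mc{E}_{\beta+1}$ that has both $\mc{A}_\beta$ and $\mc{E}_\beta$ children; so you want $\mc{A}_\beta$-copies to appear precisely when $x \in S$, not when $x \notin S$. Second, and more substantively, writing $x \in S \iff \exists n\, R(x,n)$ with $R\in \Pi^0_\beta$ and making the $n$th child code $R(x,n)$ does not by itself produce a copy of $\mc{E}_{\beta+1}$ when $x\in S$: you need \emph{infinitely many} $\mc{A}_\beta$-children (one witness $n$ gives only one), and you also need infinitely many $\mc{E}_\beta$-children to survive even if $R(x,n)$ holds for all $n$. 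This requires the standard padding tricks (duplicate each potential witness infinitely often, and always reserve a separate infinite family of children that are unconditionally $\mc{E}_\beta$). The analogous padding is also needed at the limit case with the $\mc{L}_{\beta,k}$ trees, where you must ensure each $\mc{L}_{\beta,k}$ ($k\in\omega$) appears infinitely often and $\mc{L}_{\beta,\infty}$ appears (infinitely often) exactly when $x\in S$. These are exactly the details that \cite{HirschfeldtWhite02} handles; as written, your sketch asserts the conclusion of that bookkeeping without carrying it out.
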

\begin{proof}
For (1), take $(\mc{C}_x)_x$ to be the computable sequence of trees given by Proposition 3.2 in \cite{HirschfeldtWhite02}.

For (2), take $\varphi$ to be the sentence given by evaluating the formula guaranteed by Lemma 3.5 in \cite{HirschfeldtWhite02} for $\mc{B} = \mc{E}_\beta$ at its own root node. The complexity of $\varphi$ is the natural
complexity of $\mc{E}_\beta$, which is $\Sigma_\beta$. This lemma says that for any tree $\mc{T}$, $\mc{T} \models \varphi$ if and only if $\mc{T} \cong \mc{E}_\beta$.


Finally, for (3), we use a result from Csima, Franklin, and Shore \cite{CsimaFranklinShore13} about back-and-forth trees. We will consider $\mc{A}_\beta$; the case for $\mc{E}_\beta$ is identical. We have that $\mc{A}_\beta$ is a back-and-forth tree, and hence if $\mc{C}$ is a computable structure isomorphic to $\mc{A}_\beta$, then it is also a computable back-and-forth tree. Corollary 2.6 in \cite{CsimaFranklinShore13} allows $\emptyset^{(\gamma)}$ to uniformly compute an isomorphism between these two trees when the back-and-forth rank of the trees is at most $\gamma$. Since the rank of $\mc{A}_\beta$ is $\beta$ by construction, the isomorphism is uniformly computable in $\mathbf{0}^{(\beta)}$.
\end{proof}

Now for finite ordinals $\beta$ (and writing $n$ for $\beta$), we have some additional properties. We will state the lemma in full, including properties that were covered by the previous lemma. We think that these facts are well-known, but we do not know of a reference in print.

\begin{lemma}\label{lem:moh}
For $0 < n < \omega$, the structures $\mc{A}_n$ and $\mc{E}_n$ satisfy the properties:
\begin{enumerate}
	\item Uniformly in $n$ and an index for a $\Sigma^0_n$ set $S$, there is a computable sequence of structures $\mc{C}_x$ such that
	\[ x \in S \Longleftrightarrow \mc{C}_x \cong \mc{E}_n \quad \text{ and } \quad x \notin S \Longleftrightarrow \mc{C}_x \cong \mc{A}_n .\]
	\item For each $n$, there is an elementary first-order $\exists_n$ sentence $\varphi_n$, computable uniformly in $n$, such that $\mc{E}_n \models \varphi$ and $\mc{A}_n \nmodels \varphi$.
	\item $\mc{A}_n$ and $\mc{E}_n$ are prime.
	\item $\mc{A}_n$ and $\mc{E}_n$ are $\mathbf{0}^{(n)}$-categorical uniformly in $n$.
\end{enumerate}
\end{lemma}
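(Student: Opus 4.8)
The plan: Properties (1) and (4) are not new --- every positive integer is a successor ordinal, so (taking, say, $\alpha=\omega$) (1) is the instance $\beta=n$ of Lemma~\ref{lem:michael}(1) and (4) is the instance $\beta=n$ of Lemma~\ref{lem:michael}(3), with the stated uniformity inherited. So the real content is (2), sharpening the infinitary $\Sigma_\beta$ sentence of Lemma~\ref{lem:michael}(2) to an \emph{elementary} $\exists_n$ sentence when $\beta=n$ is finite, and (3), primeness. Both exploit the same point: a back-and-forth tree of finite rank $n$ has finite depth (at most $n$), so properties of a node --- its distance from the root, and the isomorphism type of the subtree below it --- that would in general require infinitary formulas become first-order.

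For (2), define $\varphi_n$ by recursion, in the language with edge relation $E$ and root constant $c$. Let $\varphi_1$ be $\exists x\,E(c,x)$; then $\mc E_1\models\varphi_1$ and $\mc A_1\nmodels\varphi_1$. Given $\varphi_n$ (which will be $\exists_n$), let $\varphi_{n+1}$ say that some child $x$ of the root is such that $\neg\varphi_n$ holds in the subtree below $x$, with $x$ playing the role of the root: that is, $\varphi_{n+1}$ is the sentence $\exists x\,\bigl(E(c,x)\wedge x\ne c\wedge(\neg\varphi_n)^{\langle x\rangle}\bigr)$, where $(\neg\varphi_n)^{\langle x\rangle}$ is obtained from $\neg\varphi_n$ by substituting $x$ for $c$ and relativizing each quantifier to a formula $\delta_n(x,y)$ expressing ``$y$ lies in the subtree below $x$''. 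Since that subtree has depth at most $n$, $\delta_n(x,y)$ can be taken purely existential (``$y=x$, or there is a path from $x$ to $y$ of length at most $n$, all of whose vertices are $\ne c$''). Relativizing a $\Pi_n$ formula by a purely existential formula produces an equivalent $\Pi_n$ formula, so $\varphi_{n+1}$ is $\exists_{n+1}$, and the recursion is uniformly effective in $n$. Correctness: in $\mc A_{n+1}$ every child of the root roots a copy of $\mc E_n$, and $\mc E_n\models\varphi_n$, so no child is a witness and $\mc A_{n+1}\nmodels\varphi_{n+1}$; in $\mc E_{n+1}$ some child roots a copy of $\mc A_n\nmodels\varphi_n$, and that child witnesses $\varphi_{n+1}$, so $\mc E_{n+1}\models\varphi_{n+1}$. (Here we use that these trees are genuinely trees, so the subtree below a child of the root, with the induced edges, really is the relevant copy of $\mc A_n$ or $\mc E_n$.)

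For (3) we argue as in \cref{sec:decidable}: since a countable atomic structure is prime, it suffices to show $\mc A_n$ and $\mc E_n$ are atomic, and for this we show every automorphism orbit of a tuple is first-order definable. Fix $\mc A_n$ (the case of $\mc E_n$ is identical) and a tuple $\bar a$. Because $\mc A_n$ has depth at most $n$, the entries of $\bar a$ together with all their ancestors and the root form a finite set, and because only the finitely many trees $\mc A_1,\mc E_1,\dots,\mc A_n,\mc E_n$ occur as subtrees, the isomorphism type of the resulting finite labelled configuration --- recording distances to the root, equalities and adjacencies among the entries, and, for each entry, the isomorphism type of the subtree below it, which is recognized by a relativization of a sentence from~(2) --- takes only finitely many values and is defined by a single first-order formula $\theta_{\bar a}(\bar x)$. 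A back-and-forth argument mirroring the recursive construction of the trees (each node has either no children or infinitely many children of each occurring subtree type, so any partial isomorphism matching finitely many of them extends) shows that $\theta_{\bar a}$ defines precisely the $\mathrm{Aut}(\mc A_n)$-orbit of $\bar a$. Finally, a formula defining an orbit in a structure isolates the corresponding type: for any $\psi(\bar x)$, automorphism-invariance gives $\mc A_n\models\forall\bar x(\theta_{\bar a}\to\psi)$ or $\mc A_n\models\forall\bar x(\theta_{\bar a}\to\neg\psi)$, so $\theta_{\bar a}$ generates a complete type over $\Th(\mc A_n)$. Hence every type realized in $\mc A_n$ is principal, so $\mc A_n$ is atomic and therefore prime.

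The delicate points are (i) tracking quantifier complexity in (2) so that $\varphi_n$ comes out at exactly the $\exists_n$ level, and (ii) in (3), checking that the finite configurations and subtree isomorphism types are genuinely first-order definable. Both succeed only because of the finite-depth phenomenon; it is exactly the failure of finite depth for the trees $\mc A_{\beta+1},\mc E_{\beta+1}$ at a limit $\beta$ that forces Lemma~\ref{lem:michael}(2) to give merely an infinitary $\Sigma_\beta$ sentence there.
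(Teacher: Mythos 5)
Your treatment of (1), (4), and (2) tracks the paper's proof: (1) and (4) are inherited directly from Lemma~\ref{lem:michael}, and for (2) the paper likewise defines $\varphi_{n+1}$ by taking $\neg\varphi_n$ relativized to the subtree below a child of the root, with the finite-depth observation making the relativizer first-order (the paper uses the base case $\varphi_1 := \exists x\exists y\,(x\ne y)$ and is terser about why the quantifier level is preserved; your accounting via ``relativizing a $\Pi_n$ formula by an existential formula stays $\Pi_n$'' is the precise justification of the same point). Your argument for (3), however, takes a genuinely different route. The paper proves (2) and (3) simultaneously by induction on $n$: to isolate the type of a tuple $\bar a$ in $\mc{A}_{n+1}$ or $\mc{E}_{n+1}$, it splits $\bar a$ by the children $r_1,\dots,r_k$ of the root, invokes the induction hypothesis that the subtrees $\mc{A}_n,\mc{E}_n$ are prime to obtain isolating formulas for each sub-tuple $\bar a_i$ within its subtree, and combines these with the $\varphi_n/\neg\varphi_n$ distinction. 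You instead argue non-inductively: close $\bar a$ under ancestors, express in a single formula the finite configuration of distances, adjacencies, and relativized subtree isomorphism types, and verify by a back-and-forth argument --- exploiting that every non-leaf node has infinitely many children of each occurring subtree type --- that this formula defines the orbit. Both approaches establish atomicity and hence primeness; yours makes explicit the homogeneity that the paper's ``combine'' step tacitly relies on, while the paper's recursion avoids having to reason about the whole ancestor-closed configuration at once. The two are about equally detailed at the sketch level and both are sound.
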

\begin{proof}
(1) and (4) are the same as in the previous lemma. We show using induction on $n$ that these sequences satisfy (2) and (3) as well.   It is easy to see that $\mathcal{A}_1$ and $\mathcal{E}_1$ are prime models of their theories and that they are distinguishable (in the sense of (2) in the statement of the lemma) by the existential sentence $\varphi_1 := \exists x\exists y (x\neq y)$.
Assume now that $\mathcal{A}_n$ and $\mathcal{E}_n$ are prime and distinguishable by a first-order $\exists_{n}$ sentence $\varphi_n$ (in the sense that $\mc{E}_n \models \varphi_{n}$ but $\mc{A}_n \nmodels \varphi_{n}$).
We show that $\mathcal{A}_{n+1}$ and $\mathcal{E}_{n+1}$ are prime and distinguishable by a first-order $\exists_{n+1}$ sentence $\varphi_{n+1}$.

It is not hard to see that we can take $\varphi_{n+1}$ to be the sentence $\exists x \neg\varphi_n[\preceq x] \wedge (x \mbox{ is a child of the root node})$ where $x$ is a new variable not appearing in $\varphi_n$ and $\varphi_n[\preceq x]$ is the formula obtained from $\varphi_n$ by bounding every quantifier to the subtree below $x$. (Note that in a tree of rank $n$, if $z$ is a descendant of $x$, i.e. there is a path from $x$ to $z$, the length of the path is at most $n$, and so this is first-order definable and does not change the quantifier rank.) $\mc{E}_{n+1} \models \varphi_{n+1}$ but $\mc{A}_{n+1} \nmodels \varphi_{n+1}$.

It remains to show that $\mathcal{A}_{n+1}$ and $\mathcal{E}_{n+1}$ are prime. The same method will work for both structures. Let $\bar{a}$
be an arbitrary tuple in $\mathcal{E}_{n+1}$. We describe a formula that isolates the type of the tuple $\bar{a}$. Let $r_1,\ldots,r_k$ be the children of the root which are the roots of subtrees containing elements of $\bar{a}$; say that $\bar{a} = (\bar{a}_1,\ldots,\bar{a}_k)$ where $\bar{a}_i$ is in the subtree below $r_i$. (Note that we can re-order the tuples as we like, as if the type of some permutation of $\bar{a}$ is isolated, so is $\bar{a}$.) By the induction hypothesis, we know that the subtree with root $r_i$ is prime for every $i$. Hence for each $i = 1, \ldots,k$ there is a formula which isolates the type of the tuple $\bar{a}_i$ in the subtree with root $r_i$. There is also, for each $r_i$, a formula (either $\varphi_n$ or $\neg \varphi_n$) which distinguishes between whether the subtree below $r_i$ is isomorphic to $\mc{A}_n$ or $\mc{E}_n$. So we can isolate the type of $\bar{a}$ by saying that there are children $r_1,\ldots,r_k$ of the root such that $\bar{a}_i$ satisfies the formula, in the subtree below $r_i$, which isolates it, and by saying whether the subtree below each $r_i$ is isomorphic to $\mc{A}_n$ or $\mc{E}_n$.
\end{proof}

Fokina, Kalimullin, and Miller \cite{FokinaKalimullinMiller10} showed that there is a structure $\mc{A}$ with strong degree of categoricity $\mathbf{0}^{(\omega)}$. We note the well-known fact that one can also have $\mc{A}$ be a prime model. Our proof follows that of \cite{CsimaFranklinShore13}.

\begin{theorem}\label{thm:prime-0omega}
There is a computable structure $\mc{A}$ with strong degree of categoricity $\mathbf{0}^{(\omega)}$ such that $\mc{A}$ is a prime model of its theory.
\end{theorem}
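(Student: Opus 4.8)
The plan is to follow the construction of Fokina, Kalimullin, and Miller \cite{FokinaKalimullinMiller10} for realizing $\mathbf{0}^{(\omega)}$ as a strong degree of categoricity, while verifying that the structure produced is prime; this is exactly the kind of verification carried out in \cref{lem:moh} for the finite back-and-forth trees. First I would build, as a single structure $\mc{A}$, the disjoint union of infinitely many ``columns,'' where the $n$th column carries a back-and-forth pair of rank roughly $n$ and is used to code whether $n \in \emptyset^{(\omega)}$ (more precisely, using the standard uniformity, to code the $\Sigma^0_{n}$ facts one needs to reconstruct $\emptyset^{(\omega)}$). Concretely, fix a set $D \equiv_T \mathbf{0}^{(\omega)}$ together with a computable approximation reflecting its $\Sigma^0_n$-structure at level $n$; then invoke \cref{lem:moh}(1) to get, uniformly in $n$, a computable sequence of trees $\mc{C}^n_x$ with $\mc{C}^n_x \cong \mc{E}_n$ iff the relevant $\Sigma^0_n$ predicate holds, and assemble the $n$th column from finitely or infinitely many such trees together with markers (unary predicates $R_n$) identifying the column. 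The structure $\mc{A}$ is computable because each ingredient is uniformly computable.

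Next I would verify the three properties needed. \textbf{Strong degree of categoricity $\mathbf{0}^{(\omega)}$:} for the upper bound, $\mathbf{0}^{(\omega)}$ computes, column by column, which of $\mc{A}_n$ or $\mc{E}_n$ each subtree is isomorphic to, and then applies the uniform $\mathbf{0}^{(n)}$-categoricity of back-and-forth trees from \cref{lem:michael}(3)/\cref{lem:moh}(4) to build the isomorphism to a given computable copy; since $n \leq \omega$ throughout, $\mathbf{0}^{(\omega)}$ suffices and the whole thing is uniform. For the lower bound, I would exhibit a single computable copy $\mc{B}$ of $\mc{A}$ (obtained by running the coding against the approximation in a ``delayed'' fashion, exactly as in \cite{FokinaKalimullinMiller10}) such that any isomorphism $\mc{A} \to \mc{B}$ must, on the $n$th column, reveal enough to decide the $\Sigma^0_n$ fact being coded, and hence computes $D \equiv_T \mathbf{0}^{(\omega)}$; this is where one uses property \cref{lem:moh}(2) (the distinguishing $\exists_n$ sentence) to pin down which tree a subtree is, cheaply, once the isomorphism has matched up roots.

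\textbf{Primeness:} this is where the novelty over \cite{FokinaKalimullinMiller10} lies, and the argument is the one already rehearsed in the proof of \cref{lem:moh}(3). Each column is a rooted tree (or disjoint union of rooted trees) all of whose immediate subtrees are back-and-forth trees $\mc{A}_m$ or $\mc{E}_m$, which are prime by \cref{lem:moh}(3), and are distinguishable from one another by the first-order $\exists_m$ sentences $\varphi_m$ of \cref{lem:moh}(2). Given an arbitrary tuple $\bar a$ in $\mc{A}$, it meets only finitely many columns and, within each, finitely many immediate subtrees; using the relations $R_n$ to name the columns, the isolating formula for $\bar a$ asserts the existence of the relevant root children, records via $\varphi_m$ or $\neg\varphi_m$ which back-and-forth tree sits below each, and relativizes to that subtree the isolating formula for the sub-tuple supplied by the inductive primeness of $\mc{A}_m$, $\mc{E}_m$. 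Hence every type realized in $\mc{A}$ is principal, so $\mc{A}$ is prime. I expect the main obstacle to be purely bookkeeping: arranging the coding columns so that the same structure simultaneously (i) is genuinely computable with a computable ``bad copy,'' (ii) forces isomorphisms to compute all of $\mathbf{0}^{(\omega)}$ rather than just a fragment, and (iii) keeps every immediate subtree a clean back-and-forth tree so the primeness argument of \cref{lem:moh} applies verbatim — none of these is individually hard, but getting the uniformities to line up across all three is the delicate part.
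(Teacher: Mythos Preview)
Your approach is correct but takes a different route from the paper. You propose to redo the Fokina--Kalimullin--Miller coding construction (columns encoding $\Sigma^0_n$ facts about $\emptyset^{(\omega)}$, with a tailored bad copy $\mc{B}$) and then verify primeness using \cref{lem:moh}. The paper instead takes the structure of Csima--Franklin--Shore: simply the disjoint union of infinitely many copies of each $\mc{E}_n$ for $n<\omega$, with no explicit coding at all. The strong degree of categoricity $\mathbf{0}^{(\omega)}$ is then quoted directly from Theorem~3.1 of \cite{CsimaFranklinShore13}, and primeness follows immediately from \cref{lem:moh} since every component is an $\mc{E}_n$. What the paper's route buys is brevity and the avoidance of any bookkeeping: there is no set to encode, no bad copy to design by hand, and no worry about keeping the subtrees ``clean'' back-and-forth trees, because the structure is literally built out of them. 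What your route buys is self-containment---you do not need to import the CFS result---and it is essentially the verification Bazhenov and Marchuk allude to for the FKM structures. Both arguments lean on \cref{lem:moh}(2),(3) in the same way for primeness, so the only real divergence is in how the strong degree of categoricity is obtained.
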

\begin{proof}[Proof sketch]
The structure is just the disjoint union of infinitely many copies of each $\mc{E}_n$ for $n < \omega$. Theorem 3.1 of \cite{CsimaFranklinShore13} shows that this has strong degree of categoricity $\mathbf{0}^{(\omega)}$, and it is not hard to see using Lemma \ref{lem:moh} that this structure is prime.
\end{proof}

\section{C.E. In And Above a Limit Ordinal}

We begin this section by a short discussion of how we code a c.e.\ set into a structure. Consider a c.e.\ set $C$. If one knows, for each $n$, at what point the approximation to $C(n)$ has settled, then one can compute $C$. Moreover, one does not need to know exactly when $C$ settles, but just a point after which $C(n)$ has settled. In particular, any sufficiently large function can compute $C$. Moreover, $C$ itself can compute such a function. Following the terminology of Groszek and Slaman \cite{GroszekSlaman}, we say that $C$ has a self-modulus.

\begin{definition}[Groszek and Slaman \cite{GroszekSlaman}]
Let $F \colon \omega \to \omega$ and $X \subseteq \omega$. Then:
\begin{itemize}
	\item $F$ is a modulus (of computation) for $X$ if every $G \colon \omega \to \omega$ that dominates $F$ pointwise computes $X$.
	\item $X$ has a self-modulus if $X$ computes a modulus for itself.
\end{itemize}
\end{definition}

\noindent The self-modulus of a c.e.\ set $C$ is the function $f(n) = \mu s (C_s(n) = C(n))$. Groszek and Slaman showed that every $\Delta^0_2$ or $\alpha$-CEA set has a self-modulus. In fact, the self-modulus of a c.e.\ set has a nice form; it has a non-decreasing computable approximation.

\begin{definition}
A function $F \colon \omega \to \omega$ is limitwise monotonic if there is a computable
approximation function $f \colon \omega \times \omega \to \omega$ such that, for all $n$,
\begin{itemize}
	\item $F(n) = \lim_{s \to \infty} f(n,s)$.
	\item For all $s$, $f(n, s) \leq f(n, s + 1)$.
\end{itemize}
\end{definition}

\noindent In fact, it is well-known and an easy exercise to show that the sets of c.e.\ degree are exactly those with limitwise monotonic self-moduli. These remarks also relativize.

The next lemma encodes a limitwise monotonic function into the isomorphisms of copies of a computable structure. Any isomorphism dominates the limitwise monotonic function; but it does not seem to be the case that dominating the limitwise monotonic function is sufficient to compute isomorphisms.

\begin{lemma}\label{lem:main-constr}
Let $\alpha$ be a computable limit ordinal. Let $f \colon \omega \to \omega$ be limitwise monotonic relative to $\mathbf{0}^{(\alpha)}$.
There is a structure with computable copies $\mc{M}$ and $\mc{N}$ such that:
\begin{enumerate}
	\item Every isomorphism between $\mc{M}$ and $\mc{N}$ computes a function which dominates $f$.
	\item $f \oplus \mathbf{0}^{(\alpha)}$ computes an isomorphism between any two computable copies of $\mc{M}$ and $\mc{N}$.
\end{enumerate}
\end{lemma}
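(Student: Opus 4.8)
The plan is to code the limitwise monotonic function $f$ into a family of structures built out of the back-and-forth trees $\mc{A}_{\beta}$ and $\mc{E}_{\beta}$, in the same spirit as the warm-up construction of Section~\ref{sec:decidable} but now operating at level $\mathbf{0}^{(\alpha)}$ rather than $\mathbf{0}'$. Since $\alpha$ is a limit ordinal, fix the fundamental sequence $\alpha_0 < \alpha_1 < \cdots$ of successor ordinals with limit $\alpha$ that was chosen at the start of Section~3. For each $n$, we will have a ``sort'' $\mc{M}_n$ (inside $\mc{M}$, picked out by a unary relation $R_n$) and correspondingly $\mc{N}_n$ inside $\mc{N}$, designed so that coding the single value $f(n)$ requires asking a question at level $\mathbf{0}^{(\alpha)}$ — concretely, a $\Sigma^0_{\alpha}$-question, which by Lemma~\ref{lem:michael}(1) can be witnessed by placing either a copy of $\mc{E}_{\beta}$ or a copy of $\mc{A}_{\beta}$ (for appropriate successor $\beta$ from the fundamental sequence, or a suitable combination thereof) at a node. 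The sort $\mc{M}_n$ should have a distinguished element $a_0$ together with infinitely many elements $a_i$ carrying ``flags'' realized as attached back-and-forth trees, so that which flag is ``on'' records the stage at which the approximation $f(n,s)$ has reached its limit; in $\mc{N}_n$ the flag is instead attached to the $i$-th element where $i = f(n)$, exactly paralleling the $\mc{M}_n$ vs.\ $\mc{N}_n$ dichotomy of Section~\ref{sec:decidable}. The point of using back-and-forth trees rather than plain unary relations is that the relevant ``is this flag on'' information is a genuine $\mathbf{0}^{(\alpha)}$-question and cannot be detected by anything weaker.

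For item (1), given an isomorphism $g$ between $\mc{M}$ and $\mc{N}$, I would argue as in the final claim of the warm-up: for each $n$, $g$ must send the distinguished element $a_0$ of $\mc{M}_n$ to some element $b_k$ of $\mc{N}_n$, and the structure is arranged so that $k \geq f(n)$; hence $n \mapsto k$ is a $g$-computable function dominating $f$. The one subtlety over the warm-up is that $g$ is only required to preserve the (graph) structure, not the $\mathbf{0}^{(\alpha)}$-level isomorphism type of the attached trees automatically in a uniform way — but an isomorphism does preserve isomorphism type of attached subtrees, so the combinatorics goes through; the dominating function is read off just from where the finitely many marked elements go, which is $g$-computable.

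For item (2), given computable copies $\widetilde{\mc{M}}$ and $\widetilde{\mc{N}}$, I would use $f \oplus \mathbf{0}^{(\alpha)}$ to build an isomorphism sort by sort. Within sort $n$: first use $f$ to learn the value $f(n)$ — equivalently, a stage past which the approximation has settled — which tells us which element is the ``special'' one carrying the active flag; then use $\mathbf{0}^{(\alpha)}$ together with the uniform $\mathbf{0}^{(\beta)}$-categoricity of the back-and-forth trees (Lemma~\ref{lem:michael}(3), and $\beta < \alpha$ so $\mathbf{0}^{(\beta)} \leq \mathbf{0}^{(\alpha)}$) to compute isomorphisms between the corresponding attached trees in the two copies; finally patch these together, matching the special element of $\widetilde{\mc{M}}_n$ to the special element of $\widetilde{\mc{N}}_n$ and the remaining elements in any order. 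Uniformity in $n$ of all the tree-level isomorphisms (again from Lemma~\ref{lem:michael}) is what lets this be assembled into a single $f \oplus \mathbf{0}^{(\alpha)}$-computable isomorphism on all of $\mc{M}$, and symmetrically for $\mc{N}$.

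The main obstacle I expect is the bookkeeping at the limit ordinal $\alpha$: unlike the successor case of \cite{CsimaFranklinShore13}, there is no single $\beta = \alpha - 1$ to attach a tree of, so the value $f(n)$ (or rather the settling stage) has to be coded by a $\Sigma^0_{\alpha}$-question realized via the helper trees $\mc{L}_{\alpha,k}$ / the $\mc{A}_{\beta_i}$ along the fundamental sequence, and one must check carefully that (a) the coding really does pin down the right $\Sigma^0_{\alpha}$ set so that an isomorphism is forced to dominate $f$, and simultaneously (b) the resulting $\mc{M}$ and $\mc{N}$ remain \emph{computable} structures — i.e.\ that the tree-attaching can be done uniformly and computably even though the underlying approximation is only monotone in the limit, not eventually correct at a computable stage. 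Ensuring these two requirements are compatible — that the structure is computable yet the coded information is genuinely of complexity $\mathbf{0}^{(\alpha)}$ and no less — is the delicate point, and is presumably where the ``c.e.\ in and above'' hypothesis (via the limitwise monotonic self-modulus discussed before the lemma) is used essentially.
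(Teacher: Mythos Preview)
Your high-level outline is right and matches the paper: sorts $\mc{M}_n$, $\mc{N}_n$ picked out by $R_n$; a distinguished element whose image under any isomorphism yields a value $\geq f(n)$; and item~(2) handled by the uniform $\mathbf{0}^{(\beta)}$-categoricity of the attached trees. Where you diverge is precisely at the point you flag as ``the main obstacle,'' and you have not actually resolved it.

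You propose to code $f(n)$ by a single $\Sigma^0_\alpha$ question, realized via the helper trees $\mc{L}_{\alpha,k}$. This runs into real trouble: there is no single ``off'' tree among the $\mc{L}_{\alpha,k}$ (there is one for each $k\in\omega$, plus $\mc{L}_{\alpha,\infty}$), Lemma~\ref{lem:michael} gives you nothing about them directly, and if you instead reach for $\mc{A}_{\alpha+1}$ versus $\mc{E}_{\alpha+1}$ you overshoot to $\Sigma^0_{\alpha+1}$ coding and $\mathbf{0}^{(\alpha+1)}$-categoricity. The paper does \emph{not} try to encode a single level-$\alpha$ question. Instead, to each element $a_i$ (respectively $b_i$) it attaches a whole family of boxes $\mc{M}_{i,\beta}$ (respectively $\mc{N}_{i,\beta}$), one for \emph{every} successor ordinal $\beta<\alpha$, each box holding either $\mc{A}_\beta$ or $\mc{E}_\beta$. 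The condition governing which one is only $\Sigma^0_\beta$: writing $\Phi^{\emptyset^{(\beta)}}(n,s)$ for ``the computation $\Phi^{\emptyset^{(\alpha)}}(n,s)$ halts having read only the columns $\emptyset^{(\gamma)}$ with $\gamma\le\beta$,'' one sets $\mc{N}_{i,\beta}\cong\mc{E}_\beta$ iff $\exists s\,\Phi^{\emptyset^{(\beta)}}(n,s)>i$, and $\mc{M}_{i,\beta}$ the same with $\geq i$ (and $\mc{M}_{0,\beta}\cong\mc{A}_\beta$ unconditionally). Because $\alpha$ is a limit, every convergent $\Phi^{\emptyset^{(\alpha)}}(n,s)$ is already a $\Phi^{\emptyset^{(\beta)}}(n,s)$ for some $\beta<\alpha$; this is exactly how the level-$\alpha$ information gets spread across the $\beta<\alpha$ boxes without ever needing a level-$\alpha$ tree.

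This yields a clean shift: $\mc{M}_{j+1,\beta}\cong\mc{N}_{j,\beta}$ for all $j$ and $\beta$, and $\mc{M}_{0,\beta}\cong\mc{A}_\beta$ for all $\beta$, while for each $j<f(n)$ there is some $\beta$ with $\mc{N}_{j,\beta}\cong\mc{E}_\beta$. So any isomorphism must send $a_0$ to some $b_k$ with $k\ge f(n)$, giving item~(1). Your picture of ``one special element carrying the active flag'' is too coarse: in $\mc{N}_n$ there are $f(n)$ elements $b_0,\ldots,b_{f(n)-1}$ each with \emph{some} $\mc{E}$-box, distinguished from one another by the least $\beta$ at which this happens; item~(2) uses $f(n)$ to know how many such elements to look for, then $\mathbf{0}^{(\alpha)}$ to find them and sort them by threshold before invoking the uniform $\mathbf{0}^{(\beta)}$-categoricity on each box.
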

\begin{proof}
Let $\Phi$ be a computable operator such that $f(n) = \lim_{s \to \infty} \Phi^{\emptyset^{(\alpha)}}(n,s)$ and this is monotonic in $s$. Write $\emptyset^{(\alpha)} = \bigoplus_{\gamma < \alpha} \emptyset^{(\gamma)}$ for successor ordinals $\gamma < \alpha$. By convention, for $\beta < \alpha$, we say that $\Phi^{\emptyset^{(\beta)}}(n,s)$ converges if the computation $\Phi^{\emptyset^{(\alpha)}}(n,s)$ halts, but the only part of the oracle $\emptyset^{(\alpha)} = \bigoplus_{\gamma < \alpha} \emptyset^{(\gamma)}$ that is read during the computation is that part with $\gamma \leq \beta$. So if $\Phi^{\emptyset^{(\beta)}}(n,s) = m$ then $\Phi^{\emptyset^{(\alpha)}}(n,s) = m$, and because $\alpha$ is a limit ordinal, if $\Phi^{\emptyset^{(\alpha)}}(n,s) = m$ then $\Phi^{\emptyset^{(\beta)}}(n,s) = m$ for some successor ordinal $\beta < \alpha$.

Let $(\mc{A}_\beta)_{\beta < \alpha}$ and $(\mc{E}_\beta)_{\beta < \alpha}$ be as in \cref{lem:michael}.
We will construct the structures $\mc{M}$ and $\mc{N}$. They are the disjoint union of infinitely many structures $\mc{M}_n$ and $\mc{N}_n$, with $\mc{M}_n$ and $\mc{N}_n$ picked out by unary relations $R_n$.
The $n$th sort will code the value of $f(n)$.

Fix $n$. $\mc{M}_n$ will have infinitely many elements $(a_i)_{i \in \omega}$ satisfying a unary relation $S$. Each of these elements will be attached to, for each successor ordinal $\beta < \alpha$, a ``box'' $\mc{M}_{i,\beta}$ which contains within it a copy of either $\mc{A}_\beta$ or $\mc{E}_\beta$; each of the boxes are disjoint. By this we mean that there are binary relations $T_\beta$ such that $T_\beta(a_i,x)$ holds for exactly those $x \in \mc{M}_{i,\beta}$. $\mc{M}_{i,\beta}$ will be a structure in the language of \cref{lem:michael} and will be defined so that:
\begin{enumerate}
	\item $\mc{M}_{0,\beta} \cong \mc{A}_\beta$ for all $\beta$.
	\item $\mc{M}_{i,\beta} \cong \mc{E}_\beta$, $i \geq 1$, if there is $s$ such that $\Phi^{\emptyset^{(\beta)}}(n,s) \geq i$.
	\item $\mc{M}_{i,\beta} \cong \mc{A}_\beta$, $i \geq 1$, otherwise.
\end{enumerate}
Note that the condition in (2) is $\Sigma^0_\beta$ and so we can build such a structure $\mc{M}_n$ computably.

Similarly, $\mc{N}_n$ will have infinitely many elements $(b_i)_{i \in \omega}$, each of which is attached to, for each $\beta < \alpha$, a box $\mc{N}_{i,\beta}$ which contains within it:
\begin{enumerate}
	\item $\mc{N}_{i,\beta} \cong \mc{E}_\beta$ if there is $s$ such that $\Phi^{\emptyset^{(\beta)}}(n,s) > i$.
	\item $\mc{N}_{i,\beta} \cong \mc{A}_\beta$ otherwise.
\end{enumerate}
Again, the condition in (1) is $\Sigma^0_\beta$ and so we can build such a structure $\mc{N}_n$ computably.

\begin{claim} \label{claim:structure} Fix $n$.
\begin{enumerate}
	\item For each $j < f(n)$, there is $\beta < \alpha$ such that:
		\begin{itemize}
			\item for $\gamma < \beta$, $\mc{M}_{j+1,\gamma} \cong \mc{N}_{j,\gamma} \cong \mc{A}_\gamma$,
			\item for $\gamma \geq \beta$, $\mc{M}_{j+1,\gamma} \cong \mc{N}_{j,\gamma} \cong \mc{E}_\gamma$,
		\end{itemize}
	\item For each $j \geq f(n)$ and $\beta < \alpha$, $\mc{M}_{j+1,\beta} \cong \mc{N}_{j,\beta} \cong \mc{M}_{0,\beta} \cong \mc{A}_\beta$.
\end{enumerate}
\end{claim}
\begin{proof}
For (1), it is clear from the definitions of $\mc{M}_{j+1,\beta}$ and $\mc{N}_{j,\beta}$ that for all $\beta < \alpha$, $\mc{M}_{j+1,\beta} \cong \mc{N}_{j,\beta}$. Since $j < f(n)$, there is $s$ such that $\Phi^{\emptyset^{(\alpha)}}(n,s) = f(n) > j$. In particular, there must be some $\beta < \alpha$ such that there is $s$ with $\Phi^{\emptyset^{(\beta)}}(n,s) > j$. Let $\beta$ be the least such ordinal. Then for all $\gamma \geq \beta$, there is $s$ such that $\Phi^{\emptyset^{(\beta)}}(n,s) > j$, and so $\mc{M}_{j+1,\gamma} \cong \mc{N}_{j,\gamma} \cong \mc{E}_\gamma$. By choice of $\beta$, for $\gamma < \beta$, there is no $s$ such that $\Phi^{\emptyset^{(\beta)}}(n,s) > j$, and so $\mc{M}_{j+1,\gamma} \cong \mc{N}_{j,\gamma} \cong \mc{A}_\gamma$.

For (2), it is clear that $\mc{M}_{j+1,\beta} \cong \mc{N}_{j,\beta}$ for each $j \geq f(n)$ and each $\beta < \alpha$, and it is also clear that $\mc{M}_{0,\beta} \cong \mc{A}_\beta$ for each $\beta < \alpha$. If $j \geq f(n)$, then since $\Phi$ is limitwise monotonic approximation to $f$, $\Phi^{\emptyset^{(\beta)}}(n,s) \leq f(n) \leq j$ for all $s$ and $\beta$. Thus $\mc{N}_{j,\beta} \cong \mc{A}_\beta$ for all $\beta$.
\end{proof}

\begin{claim}
$\mc{M}$ and $\mc{N}$ are isomorphic.
\end{claim}
\begin{proof}
It suffices to show that for each $n$, $\mc{M}_n$ and $\mc{N}_n$ are isomorphic. Fix $n$. Using \cref{claim:structure}, we see that the map
\begin{align*}
a_0 &\mapsto b_{f(n)} & \\
a_{i} &\mapsto b_{i-1} && \text{when $0 < i \leq f(n)$} \\
a_i &\mapsto b_i && \text{when $i > f(n)$} \\
\end{align*}
extends to an isomorphism between $\mc{M}_n$ and $\mc{N}_n$.
\end{proof}

\begin{claim}
Any isomorphism between $\mc{M}$ and $\mc{N}$ can compute a function which dominates $f$.
\end{claim}
\begin{proof}
Let $g$ be an isomorphism between $\mc{M}$ and $\mc{N}$. We will compute, using $g$, a function $\hat{g}$ which dominates $f$. For each $n$, define $\hat{g}(n)$ as follows. Let $(a_i)_{i \in \omega}$ and $(b_i)_{i \in \omega}$ be the elements in the definition of $\mc{M}_n$ and $\mc{N}_n$. Then $\hat{g}(n)$ is the number satisfying $g(a_0) = b_{\hat{g}(n)}$.

To see that $\hat{g}(n) \geq f(n)$, we use \cref{claim:structure}. For each $\beta < \alpha$, $\mc{M}_{0,\beta} \cong \mc{A}_\beta$, but if $j < f(n)$, there is $\beta < \alpha$ such that $\mc{N}_{j,\beta} \cong \mc{E}_\beta$. Thus no isomorphism can map $a_0$ to $b_j$ for $j < f(n)$, and so $\hat{g}(n) \geq f(n)$.
\end{proof}

\begin{claim}
Given a computable copy $\widetilde{\mc{N}}$ of $\mc{N}$, $f \oplus \mathbf{0}^{(\alpha)}$ can compute an isomorphism between $\mc{N}$ and $\widetilde{\mc{N}}$.
\end{claim}

It is more convenient for the proof to consider $\mc{N}$ rather than $\mc{M}$ in this claim, but as they are isomorphic it does not matter which we choose.

\begin{proof}
For each $n$, let $\widetilde{\mc{N}}_n$ be the structure with domain $R_n$ in $\widetilde{\mc{N}}$. It suffices to compute an isomorphism $g$ between $\mc{N}_n$ and $\widetilde{\mc{N}}_n$ for each $n$. Inside of $\widetilde{\mc{N}}_n$, let $(c_i)_{i \in \omega}$ list the elements $x$ satisfying $S(x)$. For each $c_i$, let $\widetilde{\mc{N}}_{i,\beta}$ be the tree whose domain consists of the elements $y$ satisfying $T_\beta(c_i,y)$. To begin, we will define $g$ on $(b_i)_{i \in \omega} \subseteq \mc{N}_n$. Compute $f(n)$. Using $\mathbf{0}^{(\alpha)}$, look for $f(n)$ elements $c_i$ such that, for some $\beta < \alpha$, $\widetilde{\mc{N}}_{i,\beta} \cong \mc{E}_\beta$. This search is computable relative to $\mathbf{0}^{(\alpha)}$ by \cref{lem:michael} (2), and by \cref{claim:structure} we know that there are exactly $f(n)$ such elements and so the search will terminate after finding every such element. Rearranging $(c_i)_{i \in \omega}$, we may assume that these elements are $c_0,\ldots,c_{f(n) - 1}$.

Now, for each $k < f(n)$, find the least $\beta_k$ such that $\mc{N}_{k,\beta_k} \cong \mc{E}_{\beta_k}$, and the least $\gamma_k$ such that $\widetilde{\mc{N}}_{k,\gamma_k} \cong \mc{E}_{\gamma_k}$.  Again, this is computable in $\mathbf{0}^{(\alpha)}$ by \cref{lem:michael} (2). Note that we must ask $\mathbf{0}^{(\alpha)}$ to determine what $\beta_k$ and $\gamma_k$ are least. The sets $\{\beta_0,\ldots,\beta_{f(n)-1}\}$ and $\{\gamma_0,\ldots,\gamma_{f(n)-1}\}$ must be identical including multiplicity (but possibly in a different order) as $\widetilde{\mc{N}}_n$ and $\mc{N}_n$ are isomorphic. So by rearranging $(c_i)_{i \in \omega}$ once again we may assume that $\beta_k = \gamma_k$ for each $k < f(n)$.

We have now rearranged the list $(c_i)_{i \in \omega}$ so that for each $i$ and $\beta < \alpha$, $\mc{N}_{i,\beta} \cong \widetilde{\mc{N}}_{i,\beta}$. Define $g$ so that $g(a_i) = c_i$. For each $i$ and $\beta < \alpha$, $\mc{N}_{i,\beta} \cong \widetilde{\mc{N}}_{i,\beta}$ are isomorphic to either $\mc{A}_\beta$ or $\mc{E}_\beta$, which are uniformly $\mathbf{0}^{(\beta)}$-categorical (\cref{lem:michael} (3)), and we can compute using $\mathbf{0}^{(\alpha)}$ which case we are in. So we can define $g$ on $\mc{N}_{i,\beta}$ to be an isomorphism to $\widetilde{\mc{N}}_{i,\beta}$. Thus $g$ is an isomorphism from $\mc{N}_n$ to $\widetilde{\mc{N}}_n$.
\end{proof}

These claims complete the proof of the theorem.
\end{proof}

Using this lemma, and taking the limitwise monotonic function to be the self-modulus of a c.e.\ set, it is not hard to prove our main theorem.

{
\renewcommand{\thetheorem}{\ref{thm:cea0alpha-degreeofcat}}
\begin{theorem}
Let $\alpha$ be a computable limit ordinal and $\mathbf{d}$ a degree c.e.\ in and above $\mathbf{0}^{(\alpha)}$.
There is a computable structure with strong degree of categoricity $\mathbf{d}$.
\end{theorem}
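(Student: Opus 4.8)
The plan is to combine \cref{lem:main-constr} with the already-known fact that $\mathbf{0}^{(\alpha)}$ is itself a (strong) degree of categoricity, gluing the two together with a disjoint union. First fix a set $D \in \mathbf{d}$ that is c.e.\ in $\mathbf{0}^{(\alpha)}$ and satisfies $D \geq_T \mathbf{0}^{(\alpha)}$, and let $f$ be its self-modulus relative to $\mathbf{0}^{(\alpha)}$, namely $f(n) = \mu s\,(D_s(n) = D(n))$ for a fixed $\mathbf{0}^{(\alpha)}$-enumeration of $D$. By the (relativized) remarks preceding \cref{lem:main-constr}, $f$ is limitwise monotonic relative to $\mathbf{0}^{(\alpha)}$; moreover $f$ is a modulus for $D$ over $\mathbf{0}^{(\alpha)}$, meaning that whenever $h$ dominates $f$ pointwise one has $h \oplus \mathbf{0}^{(\alpha)} \geq_T D$, and also $D \geq_T f$ and $D \geq_T \mathbf{0}^{(\alpha)}$, so that $D \equiv_T f \oplus \mathbf{0}^{(\alpha)}$. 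Apply \cref{lem:main-constr} to this $f$ to obtain a structure with computable copies $\mc{M}$ and $\mc{N}$.

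Next, fix a computable structure $\mc{B}$ with strong degree of categoricity $\mathbf{0}^{(\alpha)}$ — for instance the disjoint union of infinitely many copies of each $\mc{E}_\beta$ for $\beta < \alpha$ a successor ordinal, which has strong degree of categoricity $\mathbf{0}^{(\alpha)}$ by \cite{CsimaFranklinShore13} (cf.\ the case $\alpha = \omega$ in \cref{thm:prime-0omega}) — witnessed by computable copies $\mc{B}_0$ and $\mc{B}_1$ such that every isomorphism between them computes $\mathbf{0}^{(\alpha)}$. Let $\mc{A}$ be the disjoint union $\mc{M} \sqcup \mc{B}_0$ expanded by a unary predicate $P$ naming the $\mc{M}$-part, and set $\mc{A}' = \mc{N} \sqcup \mc{B}_1$ with $P$ naming the $\mc{N}$-part. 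Both $\mc{A}$ and $\mc{A}'$ are computable, and since $\mc{M} \cong \mc{N}$ (a claim in the proof of \cref{lem:main-constr}) and $\mc{B}_0 \cong \mc{B}_1$, $\mc{A}'$ is a computable copy of $\mc{A}$. Since $P$ is part of the language, any isomorphism out of $\mc{A}$ carries the interpretation of $P$ onto the interpretation of $P$, hence restricts to an isomorphism of the two named pieces.

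For the upper bound, given a computable copy $\mc{C}$ of $\mc{A}$, the relation $P^{\mc{C}}$ is computable, so $\mc{C}$ splits computably into a computable copy $\mc{C}_0$ of $\mc{M}$ and a computable copy $\mc{C}_1$ of $\mc{B}_0$. By \cref{lem:main-constr}(2), $f \oplus \mathbf{0}^{(\alpha)} \equiv_T D$ computes an isomorphism $\mc{M} \to \mc{C}_0$; since $\mathbf{0}^{(\alpha)} \leq_T D$ and $\mc{B}$ is $\mathbf{0}^{(\alpha)}$-computably categorical, $D$ computes an isomorphism $\mc{B}_0 \to \mc{C}_1$; their union is a $D$-computable isomorphism $\mc{A} \to \mc{C}$. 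Hence $\mc{A}$ is $\mathbf{d}$-computably categorical. For the matching lower bound, suppose $\mc{A}$ is $\mathbf{c}$-computably categorical; then there is a $\mathbf{c}$-computable isomorphism $g \colon \mc{A} \to \mc{A}'$, which restricts to a $\mathbf{c}$-computable isomorphism $\mc{M} \to \mc{N}$ and a $\mathbf{c}$-computable isomorphism $\mc{B}_0 \to \mc{B}_1$. By \cref{lem:main-constr}(1) the first computes a function $h$ dominating $f$, and by the choice of $\mc{B}$ the second computes $\mathbf{0}^{(\alpha)}$; thus $\mathbf{c}$ computes $h \oplus \mathbf{0}^{(\alpha)} \geq_T D$, giving $\mathbf{c} \geq \mathbf{d}$. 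So $\mathbf{d}$ is the degree of categoricity of $\mc{A}$, and since the single pair $(\mc{A},\mc{A}')$ already forces every isomorphism to compute $\mathbf{d}$, it is a \emph{strong} degree of categoricity.

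The main obstacle is conceptual rather than computational: \cref{lem:main-constr} on its own produces, from an isomorphism $\mc{M} \to \mc{N}$, only a function dominating the limitwise monotonic $f$, and — as the authors explicitly remark — dominating $f$ need not be enough to compute $D$, precisely because $f$ is a modulus for $D$ only \emph{relative to} $\mathbf{0}^{(\alpha)}$. The fix is to adjoin the component $\mc{B}$ whose sole role is to force any degree of categoricity of $\mc{A}$ above $\mathbf{0}^{(\alpha)}$; once $\mathbf{0}^{(\alpha)}$ is available as an oracle, a dominating function does compute $D$. Everything else — that $\mc{A}$ is computable, that restrictions of isomorphisms respect $P$, that $D \equiv_T f \oplus \mathbf{0}^{(\alpha)}$ — is routine bookkeeping that follows from \cref{lem:main-constr}, \cref{thm:prime-0omega}, and the Groszek--Slaman remarks already in hand.
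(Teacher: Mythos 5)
Your proposal is correct and follows essentially the same route as the paper's proof: take the self-modulus $f$ of $D$, apply \cref{lem:main-constr} to get $\mc{M}$ and $\mc{N}$, and adjoin a disjoint component with strong degree of categoricity $\mathbf{0}^{(\alpha)}$ (the paper cites Theorem~3.1 of \cite{CsimaFranklinShore13}), marked by a unary predicate so that isomorphisms split across the two pieces. The paper's argument is identical in both the upper-bound (glue a $\mathbf{d}$-computable isomorphism on each part) and lower-bound (the distinguished pair forces any isomorphism to compute both $\mathbf{0}^{(\alpha)}$ and a function dominating $f$, hence $D$) directions, so the details match.
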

\addtocounter{theorem}{-1}
}
\begin{proof}
Fix $\alpha$ and let $D \in \mathbf{d}$ be a set c.e.\ in and above $\mathbf{0}^{(\alpha)}$. Since $D$ is c.e.\ in and above $\mathbf{0}^{(\alpha)}$, it has a self-modulus $f$ that is limitwise monotonic relative to $\mathbf{0}^{(\alpha)}$. Consider the structure $\mc{M}$ constructed in \cref{lem:main-constr} for this $f$. We will enrich this structure slightly to produce a new structure $\mc{S}$. Let $\mc{S}_\alpha$ be the computable structure with strong degree of categoricity $\mathbf{0}^{(\alpha)}$ constructed in Theorem 3.1 of Csima, Franklin and Shore \cite{CsimaFranklinShore13}. The new structure $\mc{S}$ consists of $\mc{M}$ and a disjoint copy of $\mc{S}_\alpha$, and a new unary relation $R$ such that $R(x)$ holds exactly when $x$ belongs to the copy of $\mc{S}_\alpha$. We claim that $\mc{S}$ has strong degree of categoricity $\mathbf{d}$.

 First, suppose that $\mc{T}$ is some other computable copy of $\mc{S}$. We will show that there is a $\mathbf{d}$-computable isomorphism between $\mc{S}$ and $\mc{T}$. Using the relation $R$, we may identify the component of $\mc{T}$ isomorphic to $\mc{S}_\alpha$. Since $\mc{S}_a$ has (strong) degree of categoricity $\mathbf{0}^{(\alpha)} \leq \mathbf{d}$, we can $\mathbf{d}$-computably find an isomorphism between the copies of $\mc{S}_\alpha$ in $\mc{S}$ and $\mc{T}$. We can also identify the component isomorphic to $\mc{M}$ in each structure. By choice of $\mc{M}$, any two such copies have an isomorphism between them computable in $f \oplus \mathbf{0}^{(\alpha)}$, and $D$ can compute this self-modulus $f$. Hence $\mathbf{d}$ can computably produce such an isomorphism, since it can compute $f \oplus \mathbf{0}^{(\alpha)}$. Gluing these two isomorphisms together gives us the result.

Since $\mc{S}_\alpha$ has strong degree of categoricity $\mathbf{0}^{(\alpha)}$, there is a computable copy $\hat{\mc{S}}_\alpha$ of $\mc{S}_\alpha$ such that every isomorphism between the two computes $\mathbf{0}^{(\alpha)}$. Let $\widetilde{\mc{S}}$ be a computable copy of $\mc{S}$ built in the following way.  Rather than using the ``standard'' copy $\mc{S}_\alpha$, use the ``hard'' copy $\hat{\mc{S}}_\alpha$ of $\mc{S}_\alpha$. Additionally, rather than using $\mc{M}$, instead use $\mc{N}$ as built in \cref{lem:main-constr}. Any isomorphism between $\mc{S}_\alpha$ and $\hat{\mc{S}}_\alpha$ computes $\mathbf{0}^{(\alpha)}$, and any isomorphism between $\mc{M}$ and $\mc{N}$ must compute a function that dominates $f$. Let $g$ be any isomorphism between $\mc{S}$ and $\widetilde{\mc{S}}$. Then by using $R$, we can restrict $g$ to an isomorphism between $\mc{S}_\alpha$ and $\hat{\mc{S}}_\alpha$ and hence $g$ can compute $\mathbf{0}^{(\alpha)}$. Since $g$ can also be restricted to an isomorphism between $\mc{M}$ and $\mc{N}$, it must compute a function dominating $f$. But $f$ is a modulus for $D$ computable in $\mathbf{0}^{(\alpha)}$, and hence $g$ must be able to compute $D$ since it can compute $\mathbf{0}^{(\alpha)}$ and a function dominating $f$. Hence $g$ can compute $\mathbf{d}$.
\end{proof}

We now turn to prime models, working above $\mathbf{0}^{(\omega)}$. Essentially, our work here is to check that in taking $\alpha = \omega$ in the previous theorem and lemma, the construction results in a prime model.

\begin{lemma}\label{lem:main-constr-prime}
Let $f \colon \omega \to \omega$ be limitwise monotonic relative to $\mathbf{0}^{(\omega)}$.
There is a prime model with two computable copies $\mc{M}$ and $\mc{N}$ such that:
\begin{enumerate}
	\item Every isomorphism between $\mc{M}$ and $\mc{N}$ computes a function which dominates $f$.
	\item $f \oplus \mathbf{0}^{(\omega)}$ computes an isomorphism between any two computable copies of $\mc{M}$ and $\mc{N}$.
\end{enumerate}
\end{lemma}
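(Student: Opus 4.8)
The plan is to take for $\mc{M}$ and $\mc{N}$ exactly the structures produced by the construction in the proof of \cref{lem:main-constr} with $\alpha = \omega$, using the $(\mc{A}_\beta)_{\beta<\omega}$, $(\mc{E}_\beta)_{\beta<\omega}$ of \cref{lem:michael}. They are then computable copies of a single structure (they are isomorphic, by the relevant claim in that proof) and already satisfy conditions (1) and (2) by \cref{lem:main-constr} itself, so the entire task is to check that this structure is a prime model. Since $\mc{M}$ is the disjoint union of its sorts $\mc{M}_n$, each of which is defined inside $\mc{M}$ by the unary relation $R_n$, and any tuple of $\mc{M}$ meets only finitely many sorts, it is enough to show that each $\mc{M}_n$ is prime (and likewise each $\mc{N}_n$, by the same argument); this is the reduction already used in the warm-up of \cref{sec:decidable} and in \cref{lem:moh}.

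To see that $\mc{M}_n$ is prime I would, as in the proof of \cref{lem:moh}, isolate the type of an arbitrary tuple $\bar a$ by a first-order formula. First record the structural facts read off from \cref{claim:structure}: every spine element $a_i$ (those satisfying $S$) has a ``box-profile cutoff'' --- either a box-level $m$ or the symbol $\infty$ --- such that the box $\mc{M}_{i,\beta}$ is $\cong\mc{A}_\beta$ for $\beta<m$ and $\cong\mc{E}_\beta$ for $\beta\ge m$; only finitely many spine elements (namely $a_1,\dots,a_{f(n)}$) have cutoff $\neq\infty$; and the resulting closure properties --- ``if a spine element's level-$\beta$ box is $\cong\mc{E}_\beta$ then so is its level-$\beta'$ box whenever $\beta'>\beta$'', and ``a spine element all of whose boxes up to a fixed level $N_0$ are $\cong\mc{A}$-trees has every box $\cong\mc{A}$'' (with $N_0$ exceeding all finite cutoffs occurring in $\mc{M}_n$) --- are, for each fixed $\beta,\beta'$, first-order sentences true in $\mc{M}_n$, hence members of $\Th(\mc{M}_n)$. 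Using the $\exists_\beta$ sentence $\varphi_\beta$ of \cref{lem:moh} (2), relativized to the subtree below a variable bound to the root of the box in question (this stays first-order because the box-trees have finite height, as in the proof of \cref{lem:moh}), I obtain first-order formulas $\psi_{\mc{E},\beta}(x)$ and $\psi_{\mc{A},\beta}(x)$ saying that the level-$\beta$ box of $x$ is $\cong\mc{E}_\beta$, respectively $\cong\mc{A}_\beta$. The closure sentences above then guarantee that $\chi_m(x):= S(x)\wedge\psi_{\mc{E},m}(x)\wedge\bigwedge_{\beta<m}\psi_{\mc{A},\beta}(x)$ isolates over $\Th(\mc{M}_n)$ the full $1$-type of a spine element with finite cutoff $m$, and $\chi_\infty(x):=S(x)\wedge\bigwedge_{\beta\le N_0}\psi_{\mc{A},\beta}(x)$ isolates the $1$-type of a spine element with cutoff $\infty$.

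Given $\bar a$, I would first enlarge it (which only helps, for isolating types) to include, with each of its box-elements, the unique spine element that box-element hangs below, so that $\bar a$ now consists of distinct spine elements $a_{i_1},\dots,a_{i_p}$ together with, in each box $\mc{M}_{i_q,\beta}$ it meets, a subtuple whose type inside that copy of $\mc{A}_\beta$ or $\mc{E}_\beta$ is isolated by a formula $\theta_{q,\beta}$ furnished by \cref{lem:moh} (3). The isolating formula for $\bar a$ then asserts that there are distinct spine elements $x_1,\dots,x_p$ with $\chi_{m_q}(x_q)$ holding for each $q$ (where $m_q$ is the cutoff of $a_{i_q}$), and such that in each relevant box the corresponding coordinates of $\bar a$ are $T_\beta$-attached to $x_q$ and satisfy $\theta_{q,\beta}$ relativized below the root of that box. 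This formula isolates the type of $\bar a$ over $\Th(\mc{M}_n)$: in any model of the theory a realization of it decomposes into spine elements whose box-profiles are pinned down by the $\chi$'s and box-subtuples whose types are pinned down by the $\theta$'s, and since $\mc{M}_n$ carries no relations linking elements of distinct boxes, nor distinct spine elements beyond $S$, these pieces already determine the complete type --- just as in the final paragraph of the proof of \cref{lem:moh}. The identical argument applies to $\mc{N}_n$, so $\mc{M}$ and $\mc{N}$ present a prime model.

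I expect the only genuinely new point to be the one isolated in the middle paragraph: a spine element's complete $1$-type mentions infinitely many boxes, and one must check that this information is a deductive consequence of a single first-order formula together with $\Th(\mc{M}_n)$. This works exactly because the $\mc{E}$-boxes of a spine element occur at an upward-closed, hence cutoff-determined, set of levels, and because only finitely many spine elements have any $\mc{E}$-box at all --- both immediate from \cref{claim:structure}. Everything else parallels \cref{lem:moh}, and conditions (1) and (2) need no work beyond invoking \cref{lem:main-constr} with $\alpha=\omega$.
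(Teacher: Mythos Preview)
Your proposal is correct and follows essentially the same route as the paper: take the construction of \cref{lem:main-constr} with $\alpha=\omega$, reduce primeness to that of each sort, pin down the $1$-type of a spine element by its cutoff via the formulas $\varphi_m$ of \cref{lem:moh}(2), and then handle arbitrary tuples using the primeness of the boxes from \cref{lem:moh}(3). Your treatment is in fact a bit more explicit than the paper's about why a finite formula together with $\Th(\mc{M}_n)$ determines the entire infinite box-profile (the upward-closure and $N_0$-threshold observations), whereas the paper simply asserts that the orbit of $b_i$ is ``determined'' by the two-box formula; both amount to the same argument.
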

\begin{proof}
The construction is exactly the same as that of \cref{lem:main-constr} with $\alpha = \omega$. We refer to the structures $\mc{A}_\beta$ and $\mc{E}_\beta$ of \cref{lem:michael} as $\mc{A}_n$ and $\mc{E}_n$, $n < \omega$, but of course these are the same. It remains to argue, using the properties from \cref{lem:moh} which hold only for the structures $\mc{A}_n$ and $\mc{E}_n$ with $n$ finite, that the resulting structure $\mc{N}$ is prime.

Recall that $\mc{N}$ is the disjoint union of structures $\mc{N}_n$, each of which satisfies the relation $R_n$. So it suffices to show that the structures $\mc{N}_n$ are prime. $\mc{N}_n$ was defined as follows: there were infinitely many elements $(b_i)_{i \in \omega}$ (satisfying the unary relation $S$), each of which is attached to (by binary relations $T_m$), for each $m < \omega$, a box $\mc{N}_{i,m}$ which contains within it:
\begin{enumerate}
	\item $\mc{N}_{i,m} \cong \mc{E}_m$ if there is $s$ such that $\Phi^{\emptyset^{(m)}}(n,s) > i$.
	\item $\mc{N}_{i,m} \cong \mc{A}_m$ otherwise.
\end{enumerate}
By \cref{claim:structure} of \cref{lem:main-constr}, for each $i$, either $i < f(n)$ and there is some $m_i < \omega$ such that:
\begin{itemize}
			\item for $\ell < m_i$, $\mc{N}_{i,\ell} \cong \mc{A}_\ell$,
			\item for $\ell \geq m_i$, $\mc{N}_{i,\ell} \cong \mc{E}_\ell$,
\end{itemize}
or $i \geq f(n)$ and for all $m < \omega$, $\mc{N}_{i,m} \cong \mc{A}_m$. Note that the sequence $\{m_i\}_{i < f(n)}$ is non-decreasing.

By \cref{lem:moh} (2), for $i < f(n)$, the automorphism orbit of $b_i$ is determined by the first-order formula with free variable $x$ which expresses that $S$ holds of $x$, that the structure with domain $T_{m_i}(x,\cdot)$ satisfies $\varphi_{m_i}$ (and so is isomorphic to $\mc{E}_{m_i}$), and that the structure with domain $T_{m_{i}-1}(x,\cdot)$ satisfies $\neg \varphi_{m_{i}-1}$ (and so is isomorphic to $\mc{A}_{m_{i}-1}$). For $i \geq f(n)$, the automorphism orbit of $b_i$ is determined by the first-order sentence with free variable $x$ which expresses that $S$ holds of $x$, and that the structure with domain $T_{m_{f(n)-1}}(x,\cdot)$ satisfies $\neg \varphi_{m_{f(n)-1}}$ (and so is isomorphic to $\mc{A}_{m_{{f(n)-1}}}$).

Fix a tuple $\bar{c}$ from $\mc{N}_n$. We will give a first-order formula defining the orbit of $\bar{c}$. We may assume that whenever $\bar{c}$ contains an element of $\mc{N}_{i,m}$, $\bar{c}$ contains $b_i$ as well. We can break the tuple $\bar{c}$ up into finitely many elements $b_{i_1},\ldots,b_{i_k}$ and finitely many tuples $\bar{c}_{i,m}$ from $\mc{N}_{i,m}$. The orbit of $\bar{c}$ is determined by the orbits of $b_{i_1},\ldots,b_{i_k}$ (each of which is determined by a first-order formula as described in the previous paragraph), the fact that $T_m(b_{i},y)$ holds for any $y \in \bar{c}_{i,m}$, and the orbits of each of the tuples $\bar{c}_{i,m}$ within $\mc{N}_{i,m}$. The latter orbits are first-order definable by \cref{lem:moh} (3).
\end{proof}

{
\renewcommand{\thetheorem}{\ref{thm:main-prime}}
\begin{theorem}
Let $\mathbf{d}$ be a degree c.e.\ in and above $\mathbf{0}^{(\omega)}$.
There is a computable prime model $\mc{A}$ with strong degree of categoricity $\mathbf{d}$.
\end{theorem}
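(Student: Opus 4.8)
The plan is to run the proof of \cref{thm:cea0alpha-degreeofcat} with $\alpha = \omega$, substituting the prime building blocks we have just isolated. Fix a set $D \in \mathbf{d}$ that is c.e.\ in and above $\mathbf{0}^{(\omega)}$ and let $f$ be its self-modulus, which is limitwise monotonic relative to $\mathbf{0}^{(\omega)}$. Apply \cref{lem:main-constr-prime} to this $f$ to obtain a prime model with computable copies $\mc{M}$ and $\mc{N}$ satisfying the two domination/computation properties. Let $\mc{P}$ be the computable prime model of \cref{thm:prime-0omega}, which has strong degree of categoricity $\mathbf{0}^{(\omega)}$, and fix a computable copy $\hat{\mc{P}}$ of $\mc{P}$ witnessing this, so that every isomorphism between $\mc{P}$ and $\hat{\mc{P}}$ computes $\mathbf{0}^{(\omega)}$. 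Define $\mc{A}$ to be the disjoint union of $\mc{M}$ and a disjoint copy of $\mc{P}$, together with a new unary relation $R$ holding exactly on the copy of $\mc{P}$; let $\widetilde{\mc{A}}$ be the analogous disjoint union of $\mc{N}$ and $\hat{\mc{P}}$. Both are computable copies of the same structure.

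The one genuinely new point is that $\mc{A}$ is prime, and for this it suffices to observe that a disjoint union of two prime (equivalently, atomic) structures, with a unary predicate naming one of the pieces, is again prime. Given a tuple $\bar{c}$ from $\mc{A}$, split it into a subtuple $\bar{c}_0$ from the copy of $\mc{P}$ and a subtuple $\bar{c}_1$ from $\mc{M}$. Since no relation links the two pieces, a quantifier relativized to $R$ (resp.\ to $\neg R$) ranges exactly over the copy of $\mc{P}$ (resp.\ over $\mc{M}$). Hence the conjunction of the relativization to $R$ of the formula isolating the type of $\bar{c}_0$ in $\mc{P}$, the relativization to $\neg R$ of the formula isolating the type of $\bar{c}_1$ in $\mc{M}$ (which exists by \cref{lem:main-constr-prime}), and the literals recording which variables lie in $R$, isolates the type of $\bar{c}$ in $\mc{A}$. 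So $\mc{A}$ is prime. I expect this is the only step requiring any care, and even it is routine; the substantial work towards primeness was already done inside \cref{lem:main-constr-prime}.

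It remains to see that $\mc{A}$ has strong degree of categoricity $\mathbf{d}$, and here the argument is verbatim that of \cref{thm:cea0alpha-degreeofcat} with $\alpha = \omega$. Given any computable copy $\mc{T}$ of $\mc{A}$, use $R$ to split it into its $\mc{P}$-part and its $\mc{M}$-part; since $\mathbf{0}^{(\omega)} \le \mathbf{d}$ we can $\mathbf{d}$-computably find an isomorphism on the $\mc{P}$-part, and since $D$ computes $f$ and hence $f \oplus \mathbf{0}^{(\omega)} \le \mathbf{d}$, by \cref{lem:main-constr-prime} we can $\mathbf{d}$-computably find an isomorphism on the $\mc{M}$-part; gluing gives a $\mathbf{d}$-computable isomorphism between $\mc{A}$ and $\mc{T}$. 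Conversely, any isomorphism $g \colon \mc{A} \to \widetilde{\mc{A}}$ restricts via $R$ to an isomorphism between $\mc{P}$ and $\hat{\mc{P}}$, so $g$ computes $\mathbf{0}^{(\omega)}$, and it also restricts to an isomorphism between $\mc{M}$ and $\mc{N}$, so by \cref{lem:main-constr-prime} it computes a function dominating $f$. Since $f$ is a modulus for $D$ computable from $\mathbf{0}^{(\omega)}$, a function dominating $f$ together with $\mathbf{0}^{(\omega)}$ computes $D$; hence $g$ computes $D$, and so $g$ computes $\mathbf{d}$. Therefore $\mathbf{d}$ is a strong degree of categoricity for the computable prime model $\mc{A}$.
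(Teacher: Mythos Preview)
Your proposal is correct and follows essentially the same approach as the paper: take $\alpha=\omega$ in \cref{thm:cea0alpha-degreeofcat}, replace $\mc{M},\mc{N}$ by the prime versions from \cref{lem:main-constr-prime}, replace $\mc{S}_\alpha$ by the prime structure of \cref{thm:prime-0omega}, and observe that the $R$-distinguished disjoint union of prime structures is prime. Your explicit argument for primeness of the disjoint union (relativizing the isolating formulas to $R$ and $\neg R$) merely spells out what the paper asserts in one line.
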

}
\begin{proof}
 The construction of such a model is similar to \cref{thm:cea0alpha-degreeofcat}, except we replace $\mc{M}$ and $\mc{N}$ from \cref{lem:main-constr} with those $\mc{M}$ and $\mc{N}$ from \cref{lem:main-constr-prime} (which are actually the same structures, if $\alpha = \omega$), and we also replace the ``easy'' and ``hard'' copies of $\mc{S}_\alpha$ with copies of the structure from \cref{thm:prime-0omega} such that any isomorphism between them computes $\mathbf{0}^{(\omega)}$. The same argument from \cref{thm:cea0alpha-degreeofcat} shows that this new structure has strong degree of categoricity $\mathbf{d}$. It remains to show that such models are prime; they are the disjoint union of prime structures, distinguishable by the relation $R$, and hence must be prime themselves.
\end{proof}

\bibliography{References}
\bibliographystyle{alpha}

\end{document}